\documentclass[12pt]{amsart}
\usepackage{amsthm,amssymb,verbatim,amsmath, amsfonts, amscd}
\usepackage{hyperref}

\usepackage{comment}
\pagestyle{plain}

\setlength{\footskip}{10mm}

\theoremstyle{plain}\newtheorem{Theorem}{Theorem}[section]
\theoremstyle{plain}
\theoremstyle{plain}
\theoremstyle{plain}\newtheorem{Lemma}[Theorem]{Lemma}
\theoremstyle{plain}
\theoremstyle{definition}
\theoremstyle{definition}
\theoremstyle{definition}
\theoremstyle{definition}
\theoremstyle{definition}\newtheorem{Question}[Theorem]{Question}
\theoremstyle{definition}
\theoremstyle{definition}
\theoremstyle{definition}
\theoremstyle{definition}
\theoremstyle{definition}
\theoremstyle{definition}
\theoremstyle{definition}
\theoremstyle{definition}\newtheorem{Notation/Definition}
[Theorem]{Notation/Definition}
\theoremstyle{definition}

\def\CF{{\mathcal{F}}}

\def\FM{{\mathfrak{M}}}

\def\Hom{\mathrm{Hom}}

\def\Ind{\mathrm{Ind}}

\def\Park{\mathrm{Park}}

\def\Res{\mathrm{Res}}           

\def\Sc{\mathrm{Sc}}

\newcommand{\SL}{\operatorname{SL}}

\newcommand{\Qd}{\operatorname{Qd}}

\newcommand{\id}{{\text{id}}}

\newcommand{\Z}{{\mathbb{Z}}}

\begin{document}
\title{The Brauer indecomposability of Scott modules
for the quadratic
group Q\lowercase{d}($p$)}
\date{\today}
\author{{Shigeo Koshitani and {\.I}pek Tuvay}}
\address{Center of Frontier Science,
Chiba University, 1-33 Yayoi-cho, Inage-ku, Chiba 263-8522, Japan.}
\email{koshitan@math.s.chiba-u.ac.jp}
\address{Mimar Sinan Fine Arts University, Department of Mathematics, 34380, Bomonti, \c{S}i\c{s}li, Istanbul, Turkey}
\email{ipek.tuvay@msgsu.edu.tr}
\keywords{Scott module, constrained fusion system, the quadratic group}

\maketitle

\begin{abstract}
Let $k$ be an algebraically closed field of prime characteristic $p$ and $P$
a finite $p$-group.
We compute the Scott $kG$-module with vertex $P$ when
$\CF$ is a constrained fusion system on $P$ and $G$ is Park's group
for $\CF$. In the case that $\CF$ is a fusion system of
the quadratic group $\Qd(p)=(\mathbb Z/p\times\mathbb Z/p)\rtimes {\mathrm{SL}}(2,p)$
on a Sylow $p$-subgroup $P$
of $\Qd(p)$ and $G$ is Park's group for $\CF$,
we prove that the Scott $kG$-module with vertex $P$ is Brauer indecomposable.
\end{abstract}

\section{Introduction}
Let $p$ be a prime number and $k$  an algebraically closed field of characteristic
$p$. For a finite group $G$, a $p$-subgroup $Q$ of $G$ and a finite dimensional $kG$-module
$M$, the Brauer quotient $M(Q)$ of $M$ with respect to $Q$ has a natural $kN_G(Q)$-module
structure. A $kG$-module $M$ is said to be {\it Brauer indecomposable} if $M(Q)$ is
indecomposable or zero as a $k (Q\,C_G(Q))$-module for any $p$-subgroup $Q$ of $G$
(see \cite{KKM}).
Brauer indecomposability of $p$-permutation modules is important for verifying categorical
equivalences between $p$-blocks of finite groups (see \cite{IK, KKL, KKM, KL}).

More precisely, if we want to have a stable equivalence of Morita type
between the two principal blocks $B_0(kG)$ and $B_0(kH)$ of finite groups $G$ and $H$,
respectively, such that $G$ and $H$ have a common Sylow $p$-subgroup $P$,
then it is quite natural to expect that the Scott $k(G\times H)$-module
$M=\Sc(G\times H,\,\Delta P)$ with respect to $\Delta P=\{(u,u)\in P\times P|u\in P\}$
would realize the stable equivalence of Morita type.
If this is the case, then for
any non-trivial subgroup $Q$ of $P$
the Brauer quotient $M(\Delta Q)$ induces a Morita equivalence between the principal
blocks $B_0(kC_G(Q))$ and $B_0(kC_H(Q))$ (by the so-called gluing problem due to
Brou\'e, Rouquier and Linckelmann) , and it turns out that $M(\Delta Q)$ has to be
indecomposable as $(kC_G(Q),\,kC_H(Q))$-bimodules because blocks are indecomposable
as bimodules.

A relationship between Brauer indecomposability of $p$-permutation modules and
saturated fusion system is given by R. Kessar, N. Kunugi and N. Mitsuhashi. Namely, in
\cite[Theorem 1.1]{KKM}, they prove that if there
exists a Brauer indecomposable $kG$-module with vertex $P$ then
$\CF_P(G)$ is a saturated fusion system. They also show that the converse of this
statement is not necessarily true. However, when $P$ is abelian and $M$ is the Scott
$kG$-module with vertex $P$ they prove the converse is also true. Later, this
result is extended for saturated fusion systems on $P$ which are not necessarily
abelian under the special case that $P$ is normal in $\CF$ and $G$ is Park's
group for $\CF$ (\cite[Theorems 1.3 and 1.4]{T};
for the construction and details of
Park's group we refer the reader to \cite{P}).
Moreover, in \cite{T} the following question is posed:

\begin{Question}
Let $P$ be a finite group, $\CF$ a saturated fusion system on $P$ and $X$
a characteristic $P-P$-biset for $\CF$. For $G=\Park(\CF, X)$ and $\iota $
Park's embedding of $P$ into $G$,
 is the Scott $kG$-module $\Sc(G, \iota P)$ Brauer indecomposable?
\end{Question}

We answer this question positively in some cases. One of these is the following result
which is the main result of this paper.

\begin{Theorem}\label{main}
Let $P$ be a Sylow $p$-subgroup of
$\Qd(p)=(\mathbb Z/p\times\mathbb Z/p)\rtimes {\mathrm{SL}}(2,p)$
and $\CF=\CF_P(\Qd(p)) $.
Then for
$G={\normalfont\Park}(\CF, \Qd(p))$ and $\iota$ Park's embedding of $P$ into $G$,
the Scott $kG$-module with vertex $\iota P$ is Brauer indecomposable.
\end{Theorem}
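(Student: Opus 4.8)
The plan is as follows. Throughout I identify $P$ with its image $\iota P\le G$ and write $M=\Sc(G,P)$; this is a trivial source $kG$-module with vertex $P$, and by the defining property of Park's group and of the characteristic biset $\Qd(p)$ one has $\CF_P(G)=\CF=\CF_P(\Qd(p))$, a constrained fusion system with $O_p(\CF)=V:=(\mathbb Z/p\times\mathbb Z/p)$ self-centralizing. Since $M$ has trivial source and vertex $P$, its Brauer quotient $M(Q)$ vanishes unless $Q$ is $G$-conjugate to a subgroup of $P$; as $\CF_P(G)$-conjugacy of subgroups of $P$ coincides with $G$-conjugacy, it suffices to prove that $M(Q)$ is indecomposable (or zero) as a $k(QC_G(Q))$-module for one fully $\CF$-normalized representative $Q$ of each $\CF$-conjugacy class of subgroups of $P$. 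The reduction to fully normalized $Q$ uses saturation of $\CF$ together with the fact that passing to a $G$-conjugate carries $M(Q)$ to $M({}^gQ)$ over the conjugate local subgroups, preserving (in)decomposability.

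Next I would invoke the Brauer-quotient formula for Scott modules: for $Q\le P$ fully normalized (so that $N_P(Q)\in\mathrm{Syl}_p(N_G(Q))$ by saturation) one has $M(Q)\cong\Sc(N_G(Q),N_P(Q))$ as $kN_G(Q)$-modules, and the relevant local subgroups are governed fusion-theoretically by $N_\CF(Q)$. Restricting down to $QC_G(Q)$, the standard restriction lemma for Scott modules identifies the unique indecomposable summand containing the trivial module with $\Sc(QC_G(Q),C_P(Q))$, whose internal fusion is $C_\CF(Q)=\CF_{C_P(Q)}(C_G(Q))$. Since any Scott module is indecomposable by construction, the whole problem reduces to the purely local assertion that, in each case, $M(Q)\downarrow_{QC_G(Q)}$ has no summand other than this Scott module; this holds whenever $C_\CF(Q)$ is nilpotent, or $C_P(Q)$ is abelian with $C_P(Q)$ normal in $C_\CF(Q)$, by the abelian case of \cite{KKM} and the normal case of \cite{T}. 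Thus it remains to run through the $\CF$-classes of subgroups of $P\cong p^{1+2}$ and check that the centralizer fusion system is always of this benign type.

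The subgroups of $P$ fall into the following $\CF$-classes, with fully normalized representatives: the trivial group; $Z=Z(P)$, to which all $p+1$ lines of $V$ are $\CF$-conjugate because $\SL(2,p)$ is transitive on the lines of its natural module; a noncentral subgroup $\langle u\rangle$ of order $p$ with $u\notin V$; the normal subgroup $V=O_p(\CF)$; a maximal elementary abelian subgroup $A=\langle Z,u\rangle\ne V$ of order $p^2$; and $P$ itself. I would then compute each centralizer directly inside the model $\Qd(p)$ (which suffices, as all the local fusion data of $G$ is read off from $\CF$): for $Q=V$ and $Q=A$ one finds $C_{\Qd(p)}(Q)=Q$, so $C_\CF(Q)$ is the nilpotent fusion system of the abelian group $Q$; for $Q=\langle u\rangle$ one finds $C_P(Q)=A$ and $C_{\Qd(p)}(\langle u\rangle)=A\rtimes\langle -I\rangle$, so $C_\CF(\langle u\rangle)$ is a fusion system on the abelian group $A$ in which $A$ is normal; the cases $Q=1$ and $Q=P$ are immediate, since $M(1)=M$ is an indecomposable Scott module and $M(P)\cong k$. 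In every instance the hypothesis of the previous paragraph is met, so $M(Q)\downarrow_{QC_G(Q)}$ is indecomposable.

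The crux --- and the step I expect to be the main obstacle --- is the case $Q=Z$. Here the normalizer fusion system $N_\CF(Z)=\CF_P(V\rtimes B)$ (with $B\le\SL(2,p)$ the stabilizer of the line $Z$) is visibly non-nilpotent: it carries the torus action that fuses the lines of $V$, so one cannot argue through $N_G(Z)$ naively. The point that rescues the argument is the computation $C_{\Qd(p)}(Z)=V\rtimes U=P$, where $U$ is the single unipotent subgroup fixing the vector spanning $Z$; consequently $C_\CF(Z)=\CF_P(P)$ is nilpotent even though $N_\CF(Z)$ is not. Feeding this into the reduction, $M(Z)\downarrow_{ZC_G(Z)}=M(Z)\downarrow_{C_G(Z)}$ is the Scott module of a group whose fusion system on $P$ is nilpotent, hence indecomposable. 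Making the transition from the wild normalizer $N_G(Z)$ to the tame centralizer $C_G(Z)$ precise --- that is, verifying that no extra indecomposable summand survives the restriction despite the non-triviality of $N_\CF(Z)$ --- is the technical heart of the proof, and is where the characteristic-biset and Park's-group bookkeeping has to be handled with care.
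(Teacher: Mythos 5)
Your proposal rests on two steps that are not available in the form you use them, and it omits the computation that is the actual technical core of the paper. First, the ``Brauer-quotient formula for Scott modules'' $M(Q)\cong\Sc(N_G(Q),N_P(Q))$ is not a theorem: by Brou\'e's results $\Sc(N_G(Q),N_P(Q))$ is \emph{one} direct summand of $M(Q)$, and whether it is the only one is essentially the statement being proved. The paper circumvents this by first establishing the identification $\Sc(G,\iota P)=\Ind_{\iota \FM}^{G}(k)$ (Theorem \ref{scott-constrained}, via Lemma \ref{Scott-permutation} applied with the base group $B$ of $P\wr S_n$ as the normal $p$-subgroup and the fact that $C_G(\iota O_p(\CF))$ is a $p$-group, Lemma \ref{Op(F)2}); only then does Lemma \ref{BrauerQuotients} give $M(\iota Q)$ as the explicit permutation module $\Ind_{N_{\iota\FM}(\iota Q)}^{N_G(\iota Q)}(k)$, whose restriction to $\iota Q\,C_G(\iota Q)$ can be attacked. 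Without that identification you have no handle on $M(Q)$ at all. Second, your criterion ``indecomposable whenever $C_\CF(Q)$ is nilpotent, or $C_P(Q)$ is abelian and normal in $C_\CF(Q)$, by \cite{KKM} and \cite{T}'' is not a licensed application of those results: they concern the Scott module of a group with a prescribed vertex, whereas you need a statement about the particular module $\Res_{QC_G(Q)}M(Q)$, and the fusion system $C_\CF(Q)$ alone does not determine that module.

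The related, and more concrete, gap is that you compute centralizers only inside the model $\Qd(p)$ and assert this ``suffices.'' It does not: the decisive objects are the centralizers $C_G(\iota Q)$ inside the wreath product $G=P\wr S_{p^2-1}$, which are not determined by $\CF$ (a priori they contain large $p'$-parts coming from permutations of the $n=p^2-1$ cosets). The heart of the paper is precisely Lemmas \ref{p-1-pcycles} and \ref{pgroup}: the verification that for $u\in P-V$ the permutation $\sigma_u$ is a product of $p-1$ disjoint $p$-cycles, followed by a matrix computation showing that $C_G(\iota Q)$ is a $p$-group for \emph{every} subgroup $Q$ of order $p^2$ --- in particular for the non-normal $A=Z(P)\times\langle u\rangle$, where $C_{S_n}(\sigma_u)\cong(\Z/p)\wr S_{p-1}$ must be ruled out by hand. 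You locate the main obstacle in the case $Q=Z(P)$, but that case is dispatched cheaply in the paper (since $QC_V(Q)=V$ and $C_G(\iota V)$ is a $p$-group by Lemma \ref{Op(F)2}); the case your outline glosses over, $|Q|=p^2$ with $Q\neq V$, is where the real work lies, and your proposal contains no substitute for it.
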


As an aside, we prove the following theorem.

\begin{Theorem}\label{side}
Let $\FM$ be a finite group of order $3{\cdot}2^n$ for an integer $n\geq 3$.
Assume that $\FM$ is of $2$-length $2$ and $O_{2'}(\FM)=1$. Let further $P$ be a
Sylow $2$-subgroup of $\FM$, and $\CF=\CF_P(\FM)$. Then for
$G=\Park(\CF, \FM)$ and $\iota$ Park's embedding of $P$ into
$G$, the Scott module $kG$-module with vertex $\iota P$ is Brauer indecomposable.
\end{Theorem}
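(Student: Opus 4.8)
The plan is to reduce Theorem~\ref{side} to the computation of $\Sc(G,\iota P)$ for constrained fusion systems established earlier in the paper, and then to verify the resulting local indecomposability conditions using the rigid structure that the hypotheses impose on $\FM$. First I would pin down that structure. Since $|\FM|=3\cdot 2^n$, the group $\FM$ is solvable and hence $2$-constrained, so together with $O_{2'}(\FM)=1$ we obtain $C_\FM(O_2(\FM))\le O_2(\FM)$; thus $\CF=\CF_P(\FM)$ is a saturated \emph{constrained} fusion system and the computation of the paper applies to $G=\Park(\CF,\FM)$. Writing $V=O_2(\FM)$ and $H=\FM/V$, we have $O_2(H)=1$ automatically, and since $\FM$ has $2$-length $2$ the quotient $H$ is not a $2$-group; as $H$ is solvable with trivial $2$-core this gives $O_{2'}(H)\cong\Z/3$. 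Because $C_H(O_{2'}(H))$ is normal in $H$ with trivial $2$-core, it equals $O_{2'}(H)$, so $H/O_{2'}(H)\hookrightarrow\Aut(\Z/3)=\Z/2$ and hence $H\cong S_3$ with $|V|=2^{n-1}$. Consequently $\FM$ is an extension $V.S_3$, the Sylow $2$-subgroup satisfies $[P:V]=2$, and a Sylow $3$-subgroup $\Z/3$ of $\FM$ acts on $V$; the case $n=3$ is exactly $\FM\cong\Qd(2)\cong S_4$ covered by Theorem~\ref{main}. Moreover $V$ is $\CF$-centric and $\Out_\CF(V)\cong\FM/V\cong S_3$, so $V$ is $\CF$-essential, and because $|\FM|_3=3$ every $\CF$-essential subgroup necessarily has automizer isomorphic to $S_3$.

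Next I would use the reduction supplied by the body: $\Sc(G,\iota P)$ is Brauer indecomposable if and only if, for every fully $\CF$-normalized $Q\le P$, the Brauer quotient $M(\iota Q)\cong\Sc\big(N_G(\iota Q),N_{\iota P}(\iota Q)\big)$ is indecomposable as a $k[\iota Q\,C_G(\iota Q)]$-module. Running through the $\CF$-conjugacy classes of subgroups of $P$, those $Q$ for which $N_P(Q)$ is normal in the normalizer system $N_\CF(Q)$ present no difficulty: there $N_\CF(Q)$ is constrained with normal Sylow $2$-subgroup, and, via the description of the local subgroups of Park's group in terms of those of $\FM$ (\cite{P} and the body), indecomposability of $M(\iota Q)$ follows from Tuvay's theorem \cite[Theorems 1.3 and 1.4]{T}. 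This disposes of every non-essential $Q$.

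The genuinely delicate cases are the $\CF$-essential subgroups $Q$, where $\Out_\CF(Q)\cong S_3$ and the $\Z/3$ permutes transitively the three involutions of a $V_4$-section of $V/\Phi(V)$ on which it acts faithfully. For such $Q$ the group $L=N_G(\iota Q)$ has $L/K\cong S_3$ with $K=\iota Q\,C_G(\iota Q)\trianglelefteq L$, so by Clifford theory the restriction to $K$ of the indecomposable module $\Sc(L,N_{\iota P}(\iota Q))$ could a priori break into $S_3$-conjugate summands. The main obstacle is exactly to rule this out, that is, to prove
\[
\Sc\big(L,N_{\iota P}(\iota Q)\big)\downarrow_{K}=\Sc\big(K,\,N_{\iota P}(\iota Q)\cap K\big)
\]
and that the right-hand side is indecomposable. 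I would establish this by identifying $\Sc(K,\cdot)$ as the trivial-source $kK$-module with vertex the Sylow $2$-subgroup $\iota Q\,C_P(\iota Q)$ of $K$, and by showing it is $L$-stable: the trivial module lies in both its head and socle and is fixed by $L$, while the regular $\Z/3$-action fuses the three ``lines'' of the $V_4$-section into a single trivial-source module rather than separating them, so the module does not grow under induction from $K$ to $L$. For $n=3$ this is precisely the $p=2$ content of Theorem~\ref{main} for $\Qd(2)\cong S_4$; for general $n\ge 3$ one checks that enlarging $V=O_2(\FM)$ leaves this local picture unchanged, the additional part of $V$ being centralized modulo the relevant $V_4$-section. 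Collecting the indecomposability of $M(\iota Q)$ over all classes of $Q$ then yields the Brauer indecomposability of $\Sc(G,\iota P)$.
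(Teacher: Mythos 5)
Your structural analysis of $\FM$ (namely $\FM/O_2(\FM)\cong S_3$ and $[P:O_2(\FM)]=2$) is correct, and your opening reduction --- $\Sc(G,\iota P)=\Ind_{\iota\FM}^G(k)$ via the constrained-fusion-system computation, then Lemma \ref{BrauerQuotients} to reduce everything to the indecomposability of $\Ind_{\iota Q\,C_{\iota\FM}(\iota Q)}^{\iota Q\,C_G(\iota Q)}(k)$ --- agrees with the paper (which in fact gets the first equality even more cheaply here: $|G:\iota\FM|=2|P|^2$ is a power of $2$, so Lemma \ref{Green} applies directly to $\Ind_{\iota\FM}^G(k)$). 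From that point on, however, your argument has two genuine gaps. First, for the ``easy'' subgroups you propose to invoke \cite[Theorems 1.3 and 1.4]{T} applied to the local subgroup $N_G(\iota Q)$. Those theorems concern Park's group of a fusion system in which $P$ is normal; $N_G(\iota Q)$ is merely a subgroup of $P\wr S_3$, and you give no identification of $N_G(\iota Q)$ or $\iota Q\,C_G(\iota Q)$ with a Park group for $N_{\CF}(Q)$, so the citation does not apply without substantial extra work.

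Second, and more seriously, in the $\CF$-essential case your justification that the restriction of $\Sc\bigl(L,N_{\iota P}(\iota Q)\bigr)$ to $K=\iota Q\,C_G(\iota Q)$ remains indecomposable (``the trivial module lies in the head and socle and is fixed by $L$\dots the $\Z/3$-action fuses the three lines into a single trivial-source module'') is a heuristic, not a proof: $L$-stability of a constituent of the socle says nothing about indecomposability of the restriction, and Clifford theory by itself cannot rule out the restriction breaking into three conjugate summands. What actually closes every case in the paper is elementary and is absent from your proposal: if $Q\not\le O_2(\FM)$, some $u\in Q$ has $\sigma_u$ a transposition, so the permutation part of any element of $C_G(\iota Q)$ lies in $\langle\sigma_u\rangle$ and $C_G(\iota Q)$ is a $2$-group; if $Q\le O_2(\FM)$, then either $C_{\FM}(Q)$ is a $2$-group (and a coset-fixing computation with the base-group coordinates again forces $C_G(\iota Q)$ to be a $2$-group), or $3$ divides $|C_{\iota\FM}(\iota Q)|$, and since $|G|_3=3$ the index $|C_G(\iota Q):C_{\iota\FM}(\iota Q)|$ is a power of $2$. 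In each case Green's Indecomposability Theorem or Lemma \ref{Green} finishes the proof; no stability analysis of the local Scott modules is needed. You should replace the second half of your argument with these centralizer computations (or supply a genuine proof of the $L$-stability claim, which would be considerably harder).
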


We shall use the following notation and terminology.
In this paper $G$ is always a finite group, and $k$ is an algebraically closed field of
a prime characteristic $p$.
For the group $\Qd(p)$, see Section 4.
For a subset $\mathcal S$ of $G$ and $g\in G$, we define
$^g \mathcal S=\{gsg^{-1}| s\in \mathcal S\}$ and
$\mathcal S^g=\{g^{-1}sg| s\in \mathcal S\}$.
Similarly for $g, x\in G$, we set $^gx=gxg^{-1}$ and $x^g=g^{-1}xg$.  
We write $H\leq G$ if $H$ is a subgroup of $G$ and                     
$H<G$ if $H$ is a proper subgroup of $G$.
Modules in this paper are finitely generated and left modules.
For a subgroup $H$ and for $kH$- and $kG$-modules $N$ and $M$ respectively,
we write $\Ind_H^G(N) =kG\otimes_{kH}N$ for the induced module of $N$ from $H$ to $G$
(and hence this becomes a $kG$-module),
and $\Res^G_H(M)$ for the restriction of $M$ to $H$ (and hence this becomes a
$kH$-module).
We denote by $k_G=k$ the trivial $kG$-module.
For a subgroup $H$ of $G$ we can define the Scott $kG$-module $\Sc (G,H)$
with respect to $H$, which is an indecomposable direct summand of
$\Ind_H^G(k_H)$ and has $k_G$ in its socle
(see \cite[Chapter 4 \S 8]{NT}).
For a $kG$-module $M$ and a $p$-subgroup $Q$ of $G$ we denote by
$M(Q)$ the Brauer quotient of $M$ with respect to $Q$.
Namely,
$M(Q)=M^Q/\sum_{R<Q}{\mathrm{Tr}}_R^Q(M^R)$
where $M^Q=\{m\in M| \ um=m \text{ for all }u\in Q\}$ and
${\mathrm{Tr}}_R^Q$ is the trace map from $M^R$ to $M^Q$
(see \cite[\S 27]{The}).
For a subgroup $H$ and a normal subgroup $N$ of $G$
we write $G=N\rtimes H$ if $G$ is a semi-direct product of $N$ by $H$.
For a positive integer $n$ we denote by $S_n$ the symmetric group of degree $n$.
For a finite set $\Omega$ we denote by ${\mathrm{Sym}}({\Omega})$ the set of
all bijections of $\Omega$, namely this is isomorphic to $S_{|\Omega|}$
where $|\Omega|$ is the number of the elements of $\Omega$.
For a subgroup $A$ of the symmetric group $S_n$ and a group $H$,
we write $H\wr A$ for the wreath product of $H$ by $A$.
For subgroups $Q, R$ of $G$, let $\Hom_G(Q,R)$ denote the set of all group homomorphisms
from $Q$ to $R$ which are induced by conjugation by some element of $G$. For a $p$-subgroup
$P$ of $G$, the fusion system $\CF_P(G)$ of $G$ on $P$ is the category whose objects are the subgroups
of $P$ and whose morphism set from $Q$ to $R$ is $\Hom_G(Q,R)$ for all $Q,R \leq P$.
When $\mathcal F_P(G)$ is the fusion sytem of $G$ on $P$, $c_g$ for $g\in G$ is defined
in \cite[p.3]{AKO}, and for the notation $O_p(\mathcal F)$ when $\mathcal F$ is a fusion system
see \cite[p.18]{AKO}.  For the other notations in finite group theory
such as $O_p(G)$, $O_{p'}(G)$ and $Z(G)$, see \cite{Gor68};
in representation theory of finite groups see \cite{NT} or \cite{The};
and in fusion systems, see \cite{AKO} or \cite{L}.

The paper is divided into five sections. Section 2 contains results
needed for the proofs of the theorems in later sections.
Section 3 analyzes Park's group and Park's embedding corresponding
to a constrained fusion system. The proof of Theorem \ref{main} is presented
in Section 4 and the proof of Theorem \ref{side} is in Section 5.

\section{Lemmas}

The following lemma is a generalization of Green's Indecomposability Theorem.

\begin{Lemma}[{}{\cite[Lemma 4.2]{IK}}]\label{Green}
Let $G$ be a finite group and $k$ an algebraically closed field
of characteristic $p$. If $H$ is a subgroup of $G$ such that $|G:H|$ is
a power of $p$, then $\Ind_H^G (k)$ is an indecomposable
$kG$-module.
\end{Lemma}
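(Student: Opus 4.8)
The plan is to reduce the assertion to the elementary case of a $p$-group by restricting $\Ind_H^G(k)$ to a carefully chosen Sylow $p$-subgroup of $G$. Write $M=\Ind_H^G(k)=k[G/H]$, the transitive permutation module on the coset space $\Omega=G/H$, which has dimension $|G:H|=p^n$. The guiding principle is that the restriction of a decomposable module is decomposable; hence, by contraposition, it suffices to exhibit a subgroup $L\leq G$ for which $\Res^G_L(M)$ is indecomposable. I will take $L$ to be a Sylow $p$-subgroup adapted to $H$.

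First I would fix a Sylow $p$-subgroup $R$ of $H$ and, since $R$ is a $p$-subgroup of $G$, enlarge it to a Sylow $p$-subgroup $P$ of $G$ with $R\leq P$. The central step is to prove that $P$ acts transitively on $\Omega$, equivalently that $PH=G$. Here I would argue by counting orders. Since $R\leq P\cap H$ and $P\cap H$ is a $p$-subgroup of $H$, while $R$ is already a Sylow $p$-subgroup of $H$, we must have $P\cap H=R$. Writing $|P|=p^m$, the $p$-power index hypothesis $|G:H|=p^n$ forces $|R|=p^{\,m-n}$, and therefore $|PH|=|P|\,|H|/|P\cap H|=p^{m}\,|H|/p^{\,m-n}=p^n\,|H|=|G|$, so that $PH=G$ as desired. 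Consequently there is a single $(P,H)$-double coset in $G$, and the Mackey decomposition formula collapses to $\Res^G_P(M)\cong\Ind_{P\cap H}^{P}(k)=\Ind_R^P(k)$.

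It then remains to see that $\Ind_R^P(k)$ is indecomposable as a $kP$-module, and this is the easy classical case of Green's theorem for a $p$-group. Indeed, since $P$ is a $p$-group and $k$ has characteristic $p$, the algebra $kP$ is local and $k_P$ is its only simple module. Frobenius reciprocity gives $\dim_k\Hom_{kP}(\Ind_R^P(k),\,k_P)=\dim_k\Hom_{kR}(k,k)=1$, so the head $\Ind_R^P(k)/\rad(\Ind_R^P(k))$ is isomorphic to the simple module $k_P$; a module with simple head is indecomposable. Since $\Res^G_P(M)$ is thus indecomposable, $M=\Ind_H^G(k)$ is indecomposable, completing the argument.

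The main obstacle, and the only genuinely nonformal point, is establishing the transitivity $PH=G$: everything before it is a choice and everything after it is routine homological bookkeeping. I would stress that the choice of $P$ \emph{containing} a Sylow $p$-subgroup of $H$ is essential, since an arbitrarily chosen Sylow $p$-subgroup need not act transitively on $\Omega$, and that this reduction step is precisely where the hypothesis that $|G:H|$ is a power of $p$ is consumed. Note that algebraic closedness of $k$ is not actually needed beyond $\mathrm{char}\,k=p$, so no generality in the statement is lost.
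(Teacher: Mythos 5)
Your proof is correct and complete. The paper itself gives no argument for this lemma (it is quoted verbatim from \cite[Lemma 4.2]{IK}), and your reduction --- choosing a Sylow $p$-subgroup $P$ of $G$ containing a Sylow $p$-subgroup $R$ of $H$, using the order count to get $PH=G$ so that Mackey collapses $\Res^G_P\Ind_H^G(k)$ to the single summand $\Ind_R^P(k)$, and then invoking locality of $kP$ to see that this module has simple head --- is exactly the standard argument behind that cited result. Your closing remark that algebraic closedness is superfluous is also accurate.
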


When proving Brauer indecomposability results in the next sections, we are
dealing with Brauer quotients of induced modules several times. The following
lemma makes these computations for the particular case which we are interested in.

\begin{Lemma}\label{BrauerQuotients}
Let $H$ be a subgroup of a finite group $G$ and $P$ a Sylow $p$-subgroup of $H$.
Suppose that $\CF_P(H)=\CF_P(G)$, then for $M=\Ind_H^G(k)$ and for any $Q\leq P$
we have
$$M(Q)= \Ind_{N_H(Q)}^{N_G(Q)}(k) \mbox{ and } \Res_{Q\,C_G(Q)}^{N_G(Q)} (M(Q))=\Ind_{QC_H(Q)}^{Q\,C_G(Q)}(k). $$

\end{Lemma}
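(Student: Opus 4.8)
The plan is to realize $M = \Ind_H^G(k)$ as the permutation $kG$-module $k[G/H]$ on the left cosets of $H$, on which $G$ acts by left translation, and then to exploit the standard description of the Brauer quotient of a permutation module. Recall (see \cite[\S 27]{The}) that for a permutation module $k\Omega$ and a $p$-subgroup $Q$, the images of the $Q$-fixed points form a $k$-basis of the Brauer quotient, so that $(k\Omega)(Q) \cong k[\Omega^Q]$ as $kN_G(Q)$-modules. Applying this with $\Omega = G/H$ gives
\[
M(Q) \cong k[(G/H)^Q], \qquad (G/H)^Q = \{\, gH : Q^g \le H \,\},
\]
on which $N_G(Q)$ acts by left translation. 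Both asserted identities then follow once the relevant orbit and stabilizer data for this fixed-point set are computed.

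For the first equality I would show that $N_G(Q)$ acts transitively on $(G/H)^Q$ with the stabilizer of the trivial coset $H = eH$ equal to $N_H(Q)$; this yields $(G/H)^Q \cong N_G(Q)/N_H(Q)$ and hence $M(Q) \cong \Ind_{N_H(Q)}^{N_G(Q)}(k)$. The stabilizer computation is immediate: an element $n \in N_G(Q)$ fixes $H$ iff $n \in H$, so the stabilizer is $N_G(Q) \cap H = N_H(Q)$. Transitivity is the heart of the argument and is exactly where the hypothesis $\CF_P(H) = \CF_P(G)$ enters. Given $gH \in (G/H)^Q$, so $Q^g \le H$, I would first use that $P$ is a Sylow $p$-subgroup of $H$ to find $h \in H$ with $Q^{gh} \le P$. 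Conjugation by $gh$ then defines a morphism $Q \to P$ in $\Hom_G(Q,P)$; by the fusion hypothesis $\Hom_G(Q,P) = \Hom_H(Q,P)$, so there is $h'' \in H$ inducing the same map on $Q$. Comparing the two conjugations shows that $b = gh(h'')^{-1}$ centralizes $Q$, i.e.\ $b \in C_G(Q) \le N_G(Q)$, and then $b^{-1}\cdot gH = h''h^{-1} H = H$. Thus every element of $(G/H)^Q$ lies in the $N_G(Q)$-orbit of $H$, giving transitivity.

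For the second equality I would restrict the permutation module $k[(G/H)^Q]$ to $Q\,C_G(Q)$ and identify it as a transitive $Q\,C_G(Q)$-set. Two observations make this quick. First, $Q$ acts trivially on $(G/H)^Q$, so the $Q\,C_G(Q)$-orbits coincide with the $C_G(Q)$-orbits. Second, the element $b^{-1}$ witnessing transitivity above was already found inside $C_G(Q)$, so in fact $C_G(Q)$ --- and hence $Q\,C_G(Q)$ --- acts transitively on $(G/H)^Q$. It remains to compute the stabilizer of $H$: an element of $Q\,C_G(Q)$ fixes $H$ iff it lies in $Q\,C_G(Q) \cap H$, and since $Q \le H$ one checks directly that $Q\,C_G(Q) \cap H = Q\,C_H(Q)$. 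Therefore $(G/H)^Q \cong Q\,C_G(Q)/Q\,C_H(Q)$ as $Q\,C_G(Q)$-sets, which gives $\Res_{Q\,C_G(Q)}^{N_G(Q)}(M(Q)) \cong \Ind_{Q\,C_H(Q)}^{Q\,C_G(Q)}(k)$.

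The routine parts are the permutation-module formula for the Brauer quotient and the two stabilizer computations; the one genuinely substantive step, which I expect to be the main obstacle, is the transitivity argument, where the Sylow property of $P$ in $H$ lets one move $Q^g$ into $P$ and the equality of fusion systems lets one replace the resulting $G$-conjugation by an $H$-conjugation modulo $C_G(Q)$. The fortunate circumstance that the transitivity-witnessing element can be taken in $C_G(Q)$ is precisely what allows the same computation to serve both halves of the statement.
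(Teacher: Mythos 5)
Your proof is correct and follows essentially the same route as the paper: the key step in both is showing that $\{g \in G : Q^g \le H\} = C_G(Q)H$, by first moving $Q^g$ into $P$ via Sylow's theorem in $H$ and then using $\CF_P(H)=\CF_P(G)$ to replace the $G$-conjugation by an $H$-conjugation up to an element of $C_G(Q)$ (the paper phrases this through Brou\'e's double-coset formula for $M(Q)$ rather than through the fixed-point basis of $k[G/H]$, but the content is identical). The only, harmless, divergence is in the second identity, where the paper applies the Mackey formula together with $N_G(Q)=C_G(Q)N_H(Q)$, whereas you reuse the observation that the transitivity witness already lies in $C_G(Q)$; both reduce to the same stabilizer computation $Q\,C_G(Q)\cap H = Q\,C_H(Q)$.
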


\begin{proof}
From \cite[1.4]{B}, we have $M(Q)= \oplus_{g} \Ind_{N_G(Q)\cap\, {^g\!H}}^{N_G(Q)} (k)$
where $g$ runs over representatives of double cosets $N_G(Q) \backslash T_G(Q,H) / H$
for
\linebreak
$T_G(Q,H)=\{ g\in G \ | \  Q^g \leq H \}$. We claim that $T_G(Q,H)= N_G(Q) H$.
Indeed, if $g \in T_G(Q,H) $ the condition
$Q^g \leq H$ implies that there exists $x\in H$ such that $Q^{gx} \leq P$ since $P$ is
a Sylow $p$-subgroup of $H$. Thus $c_{gx}:Q \to Q^{gx}$ is in $\CF_P(G)=\CF_P(H)$ which
implies $gx$ to be in $ C_G(Q) H$. Hence $g \in  C_G(Q) H$ which shows the containment
$T_G(Q,H) \leq  N_G(Q) H$ and the other direction is trivial. Therefore, there exists only
one coset above and the first statement of the lemma is established.

Since as a $k N_G(Q)$-module, $M(Q)$ is equal to $\Ind_{N_H(Q)}^{N_G(Q)}(k)$, by the Mackey formula
$$\Res_{Q\,C_G(Q)}^{N_G(Q)} (M(Q))=\bigoplus_{g} \Ind_{Q\,C_G(Q) \cap\, {}^g\!N_H(Q)}^{Q\,C_G(Q)}(k)$$
where $g$ runs over a set of representatives of double cosets \linebreak$ Q\,C_G(Q)\backslash N_G(Q) / N_H(Q)$
in $N_G(Q)$. By the equality of the fusion systems, we have $C_G(Q) N_H(Q)= N_G(Q)$, so there is only
one coset above. Therefore, $\Res_{Q\,C_G(Q)}^{N_G(Q)} (M(Q))=\Ind_{Q\,C_H(Q)}^{Q\,C_G(Q)}(k). $
\end{proof}

The following lemma will be used in the proofs of Theorems \ref{main} and \ref{side}.
Note that this lemma can also be seen as a generalization of Green's Indecomposability Theorem.

\begin{Lemma}\label{Scott-permutation}
Let $H$ be a subgroup of a finite group $G$ and $P$ a Sylow $p$-subgroup of $H$
where $\CF_P(H)=\CF_P(G)$. Assume that there exists a normal $p$-subgroup $N$ of $G$
such that $C_G(N \cap H)$ is a $p$-group, then we have
$$\Sc(G,P)= \Ind_H^G(k).$$
\end{Lemma}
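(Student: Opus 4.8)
The plan is to establish the equality by proving two separate facts: that the Scott module $\Sc(G,P)$ is a direct summand of $M:=\Ind_H^G(k)$, and that $M$ is in fact \emph{indecomposable}, so that the two modules coincide. The first fact is routine: since $P$ is a Sylow $p$-subgroup of $H$, the trivial module $k_H$ is a direct summand of $\Ind_P^H(k)$, whence $M=\Ind_H^G(k)$ is a direct summand of $\Ind_P^G(k)$; as $\Hom_{kG}(M,k)\cong\Hom_{kH}(k,k)=k$, the module $M$ has $k$ in its head, and $\Sc(G,P)$ is the unique indecomposable direct summand of $\Ind_P^G(k)$ with this property, so it divides $M$. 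The entire difficulty is therefore concentrated in showing that $M$ is indecomposable; note that Green's theorem (Lemma~\ref{Green}) does not apply directly, because $|G:H|$ need not be a power of $p$.

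I would detect all indecomposable summands of $M$ simultaneously by passing to the Brauer quotient at the subgroup $Z:=N\cap H$. Since $Z$ is a normal $p$-subgroup of $H$ we have $Z\leq O_p(H)\leq P$, and from $C_G(Z)\supseteq C_G(P)$ together with the hypothesis that $C_G(Z)$ is a $p$-group it follows that $C_G(P)$ and $C_H(Z)$ are $p$-groups as well. Applying Lemma~\ref{BrauerQuotients} with $Q=Z$ gives $M(Z)=\Ind_{N_H(Z)}^{N_G(Z)}(k)$. The equality of fusion systems yields $\Aut_G(Z)=\Aut_H(Z)$, hence $N_G(Z)=C_G(Z)\,N_H(Z)$ and $|N_G(Z):N_H(Z)|=|C_G(Z):C_H(Z)|$, which is a power of $p$. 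Green's theorem (Lemma~\ref{Green}) then shows that $M(Z)$ is an indecomposable $kN_G(Z)$-module.

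Next I would show that every indecomposable summand $L$ of $M$ has $L(Z)\neq 0$, i.e. that $Z$ is subconjugate to a vertex of $L$; this is where the normal $p$-subgroup $N$ enters. Because $N\trianglelefteq G$, each $N$-orbit on $G/H$ through $gH$ has stabiliser $N\cap{}^{g}H={}^{g}(N\cap H)={}^{g}Z$, so $\Res_N^G M$ is a direct sum of modules $\Ind_{W}^N(k)$ with each $W$ conjugate in $G$ to $Z$; since $N$ is a $p$-group, each such summand is indecomposable with vertex $W$ by Green's theorem. For an indecomposable summand $L$ of $M$ with vertex $V$, relative $V$-projectivity together with the Mackey formula forces every indecomposable summand of $\Res_N^G L$ to have vertex subconjugate to $V\cap N$; comparing with the previous decomposition shows that $Z$ is subconjugate to $V$, whence $L(Z)\neq 0$.

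Finally, since $M(Z)=\bigoplus_{L\mid M}L(Z)$ with every summand $L(Z)$ nonzero, the indecomposability of $M(Z)$ established above forces $M$ to have a single indecomposable summand; that is, $M=\Ind_H^G(k)$ is indecomposable, and combined with the first paragraph this gives $\Sc(G,P)=\Ind_H^G(k)$. The main obstacle is precisely this indecomposability of $M$, and the crux is the two-sided use of $Z=N\cap H$: on the one hand making $M(Z)$ indecomposable via the fact that $C_G(Z)$ is a $p$-group and Green's theorem, and on the other hand using normality of $N$ to guarantee that $Z$ is subconjugate to the vertex of every summand, so that the single Brauer quotient $M(Z)$ already sees all of $M$.
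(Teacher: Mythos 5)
Your proposal is correct and follows essentially the same route as the paper: both arguments combine Lemma~\ref{BrauerQuotients}, the equality of fusion systems and the hypothesis that $C_G(N\cap H)$ is a $p$-group to make the Brauer quotient $M(N\cap H)$ indecomposable via Lemma~\ref{Green}, and both use normality of $N$ together with the Mackey formula and Green's theorem applied to $\Res_N^G$ to see that every indecomposable summand of $M$ survives the Brauer construction at $N\cap H$. The only (cosmetic) difference is that you count summands of $M(N\cap H)$ directly to conclude $M$ is indecomposable, whereas the paper writes $M=\Sc(G,P)\oplus X$, rules out $\Sc(G,P)(N\cap H)=0$ by the Brou\'e--Puig iterated Brauer construction, and then derives $X=0$ by the same restriction-to-$N$ argument.
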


\begin{proof}
Since $P$ is a Sylow $p$-subgroup of $ H$, the $kG$-Scott module $\Sc(G, P)$
is a direct summand of $\Ind_H^G(k)$
(see \cite[Chapter 4, Corollary 8.5]{NT}). Thus, there is a $kG$-module
$X$ such that
$$\Ind_H^G (k) = \Sc(G, P) \oplus X.$$

Set $R=N \cap H$, then $R$ is a normal subgroup of $P$. If $R$ is
trivial, the condition on the centralizer of $R$ in $G$ implies
that $G$ is a $p$-group, so $\Ind_H^G (k)$ is indecomposable by Green's
Indecomposability Theorem and hence $\Ind_H^G (k) = \Sc(G, P)$ in this
case. Assume $R$ is non-trivial and set $I=\Ind_H^G (k)$ and $S=\Sc(G,P)$.
By Lemma \ref{BrauerQuotients} $I( R)=\Ind_{N_H( R)}^{N_G(R)} (k) $. Moreover,
by the equality of fusion systems, we have
$$|N_G(R)|/|C_G(R)| = |N_H(R)| / |C_H(R)|.$$
So the quotient $|N_G(R):N_H(R)|$ is a power of $p$ since $C_G(R)$ is a p-group.
Thus Lemma \ref{Green} implies that
$I(R)=\Ind_{N_H(R)}^{N_G(R)} (k)$ is
indecomposable. Therefore, the identity
$$I(R)= S(R) \oplus X(R)$$
implies that either $X(R)=0$ or $S(R)=0$.
If the latter case occurs, since $R$ is normal in $P$,
as $N_G(P) \cap N_G(R)$-modules
$$0 \neq S( P)\cong S(R)( P)=0 $$
(see \cite[Proposition 1.5(3)]{BP})
where the left hand side is non-zero since $S$ has vertex $P$. Hence
we get a contradiction. Thus, we have $X(R)=0$.

On the other hand, since $X \ | \ \Ind_{H}^G (k)$ by the Mackey formula
$$\Res_N^G (X) \ \Big| \ \bigoplus_g \Ind_{N\cap \, ^g\!H}^N (k)$$
where $g$ runs through representatives of double cosets in
$N \backslash G / H$. By Green's Indecomposability Theorem each
summand above is indecomposable since
$N$ is a $p$-group. So, if $X$ is non-zero, $\Res_N^G (X)$ is isomorphic 
to a direct sum of some of the permutation modules $\Ind_{N\cap \, ^g\!H}^N (k).$
Moreover, $N$ being normal in $G$ implies $N\cap \, ^g\!H = \ ^g (N\cap  H ) =\ ^g\!R$.
On the other hand, by \cite[1.4]{B} $(\Ind_{N\cap \, ^g\!H}^N (k))( ^g\!R)$ is non-zero. Hence, 
if $X$ is non-zero, there exists a direct summand $X'$ of $X$ such that $X'(^g \!R) \neq 0$ .
This contradicts with $X(R)$ being zero. Therefore $X$ must be zero.
\end{proof}

\section{Park's construction in the case of constrained fusion systems}

Recall that a saturated fusion system $\CF$ on $P$ is called {\it constrained} if
$C_P(O_p(\CF)) \leq O_p(\CF)$. This definition is analogous with the definition
of $p$-constrained groups. Recall that a finite group $G$ is called
{\it $p$-constrained} if $O_p(G/ O_{p'}(G))$
contains its own centralizer in $G/O_{p'}(G)$ and called {\it strictly $p$-constrained}
if $G$ is $p$-constrained and $O_{p'}(G)=1.$  It is shown in \cite[Proposition 4.3]{BCGLO}
that every constrained fusion system $\CF$ on $P$ is realized by a unique strictly $p$-constrained
group containing $P$ as a Sylow $p$-subgroup, which is called a {\it model for $\CF$}.

Let $\CF$ be a constrained fusion system on $P$ and $\FM$ be a
model for $\CF$. Since $P$ is a Sylow subgroup of $\FM$ and $\CF=\CF_P(\FM)$,
the group $\FM$ can be regarded as a characteristic $P-P$-biset for $\CF$
(for definition and existence result of a characteristic biset for a saturated
fusion system, we refer the reader to \cite[Proposition 5.5]{BLO}).
Thus $\FM$ can be used to construct
Park's group for $\CF$. Recall that $\Park(\CF, \FM)$ is the
group of bijections of the $P-P$-biset $\FM$ preserving the right
$P$-action. Letting $G=\Park(\CF, \FM)$, Park's embedding
$\iota: P \hookrightarrow G$ is defined as $\iota: u\mapsto (m\mapsto u m)$ for all 
$m \in M$ so that 
$\CF \cong \CF_{\iota P}(G)$ ( \cite[Theorem 3]{P}).
As mentioned in \cite[Line 12 on page 408]{P}, $G$ is isomorphic to the
wreath product $P \wr S_n$ where $n$ is the index $|\FM:P|$. Indeed, this 
isomorphism can be given as follows: letting $\FM = \bigsqcup_{i=1}^n m_i P$, and observing that 
each $f \in G$ is determined by its value on the coset representatives $m_i$ for $i=1, \ldots , n$, 
$f \mapsto (x_1, \ldots, x_n; \sigma_f)$ where $f(m_i)=m_{\sigma_f(i)} x_{\sigma_f(i)}.$

Throughout the rest of the paper, let us
identify $G$ with $P \wr S_n$ whenever $G=\Park(\CF, \FM)$ where $\FM$ is a
model for the constrained fusion system $\CF$ and $n$ is the index of $P$ in $\FM$.

\begin{Lemma}\label{P}
Let $\CF$ be a constrained fusion system on $P$ and $\FM$ a model for
$\CF$. Let also $G=\Park(\CF, \FM)$ then
$$\iota P=\{(m_{1}^{-1} u m_{\sigma_u^{-1}(1)}, \ldots, m_n^{-1} u m_{\sigma_u^{-1}(n)}; \sigma_u) \ | \ u \in P \}$$
where $\{m_1, \ldots, m_n\}$ is a set of left coset representatives of $P$ in $\FM$ and $\sigma_u=\sigma_{\iota(u)}$ 
for all $u \in P$.
\end{Lemma}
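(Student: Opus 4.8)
The plan is to prove the lemma by directly unwinding the explicit isomorphism $G \cong P \wr S_n$ recorded just before the statement, applied to the image of Park's embedding. Recall that under this isomorphism an element $f \in G$ corresponds to $(x_1, \ldots, x_n; \sigma_f)$, where the coordinates are determined by $f(m_i) = m_{\sigma_f(i)}\, x_{\sigma_f(i)}$ for each coset representative $m_i$, with each $x_{\sigma_f(i)} \in P$. The whole argument consists in specializing this formula to $f = \iota(u)$ and reading off the coordinates.

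First I would substitute $f = \iota(u)$. By the definition of Park's embedding, $\iota(u)$ acts as $m \mapsto um$, so in particular $\iota(u)(m_i) = u m_i$ for every coset representative $m_i$. Next I would identify the permutation $\sigma_u := \sigma_{\iota(u)}$: since left multiplication by $u \in P$ permutes the left cosets $m_i P$, we have $u m_i P = m_{\sigma_u(i)} P$, which is exactly the statement that $\sigma_u$ is the permutation induced by left translation by $u$. Consequently $u m_i$ lies in the coset $m_{\sigma_u(i)} P$, so that $x_{\sigma_u(i)} = m_{\sigma_u(i)}^{-1}\, u\, m_i$ is a well-defined element of $P$. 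This step simultaneously verifies that the coordinates of $\iota(u)$ genuinely lie in $P$, as is needed for membership in $P \wr S_n$.

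Finally I would reindex. Writing $j = \sigma_u(i)$, equivalently $i = \sigma_u^{-1}(j)$, the expression for $x_{\sigma_u(i)}$ becomes $x_j = m_j^{-1}\, u\, m_{\sigma_u^{-1}(j)}$; reading off all coordinates $j = 1, \ldots, n$ and letting $u$ range over $P$ then gives precisely the claimed description of $\iota P$. The computation is entirely formal, so there is no serious obstacle; the only point requiring care is the appearance of the inverse permutation $\sigma_u^{-1}$. This arises exactly because in the chosen isomorphism the symmetric-group component acts on the \emph{target} index of $f(m_i)$ rather than the source index, so that solving for the coordinate labelled $j$ forces the source index to be $\sigma_u^{-1}(j)$.
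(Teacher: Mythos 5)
Your proposal is correct and follows essentially the same route as the paper's own proof: both specialize the explicit isomorphism $G \cong P \wr S_n$ to $f = \iota(u)$, read off $x_{\sigma_u(i)} = m_{\sigma_u(i)}^{-1} u\, m_i$ from $\iota(u)(m_i) = u m_i = m_{\sigma_u(i)} x_{\sigma_u(i)}$, and reindex to obtain the stated formula. Your write-up is in fact slightly more careful than the paper's, in that it explicitly notes why the coordinates land in $P$ and why the inverse permutation appears.
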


\begin{proof}
Let us write $\FM = \bigsqcup_{i=1}^n m_i P$ as a disjoint union of left cosets of $P$.
Since any automorphism in $G$ is determined by its value on the
coset representatives $m_i$, we can compute the subgroup $\iota P$ as follows. Let
$u$ be an arbitrary element of $P$, then the image of $\iota(u)$ in
$G$ is $(x_1, \ldots, x_n; \sigma_u)$ where
$$\iota (u)(m_i)=um_i=m_{\sigma_u(i)} x_{\sigma_u(i)}$$
which is a direct consequence of the isomorphism between $G$ and $P \wr S_n$ 
as explained before the statement of the lemma. 
\end{proof}

Since a model $\FM$ acts on itself by left multiplication so that this action gives an automorphism of 
$\FM$ preserving the right $P$-action, the embedding $\iota: P \to G$ 
extends to $\FM$. We will denote this extended map also by $\iota$.

\begin{Lemma}\label{Op(F)}
Let $\CF$ be a constrained fusion system on $P$ and $\FM$ a model for
$\CF$. Let $G$ be equal to $\Park(\CF, \FM)$ and $B$ the base group of $G$. Then 
$\iota O_p(\CF) = B \cap (\iota P)$ 
and as a subgroup of $G$
$$\iota O_p(\CF)=\{ (u ^{m_1}, \ldots, u^{m_n}; {\normalfont\id}) \ | \ u \in O_p(\CF) \}$$
where $\{m_1, \ldots, m_n\}$ is a set of left coset representatives of $P$ in $\FM$.
\end{Lemma}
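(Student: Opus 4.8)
The plan is to read off both assertions directly from the coordinate description of $\iota P$ in Lemma \ref{P}, once the permutation part $\sigma_u$ is pinned down. Under the identification $G = P \wr S_n$ the base group is $B = \{(x_1,\dots,x_n;\id) \mid x_i \in P\}$, i.e.\ those elements whose $S_n$-component is trivial. Hence for $u \in P$ the element $\iota(u)$ lies in $B$ if and only if $\sigma_u = \id$, and in that case Lemma \ref{P} gives its coordinates as $m_i^{-1} u m_{\sigma_u^{-1}(i)} = m_i^{-1} u m_i = u^{m_i}$. So both the equality $\iota O_p(\CF) = B \cap (\iota P)$ and the explicit formula will follow as soon as I show that, for $u \in P$, the condition $\sigma_u = \id$ is equivalent to $u \in O_p(\CF)$.

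First I would unwind when $\sigma_u = \id$. By the defining relation $\iota(u)(m_i) = u m_i = m_{\sigma_u(i)} x_{\sigma_u(i)}$ from the proof of Lemma \ref{P}, the permutation $\sigma_u$ fixes $i$ exactly when $u m_i P = m_i P$, that is $m_i^{-1} u m_i \in P$, i.e.\ $u \in {}^{m_i}P$. Thus $\sigma_u = \id$ iff $u \in \bigcap_{i=1}^n {}^{m_i}P$. Since every element of $\FM$ has the form $m_i p$ with $p \in P$ and ${}^{m_i p}P = m_i(pPp^{-1})m_i^{-1} = {}^{m_i}P$, this intersection is exactly the core $\bigcap_{m \in \FM} {}^m P$ of $P$ in $\FM$.

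The key step, and the main thing to justify, is the identification of this core with $O_p(\CF)$. The core $\bigcap_{m \in \FM} {}^m P$ is a normal $p$-subgroup of $\FM$ contained in $P$, while $O_p(\FM)$ is the largest normal $p$-subgroup of $\FM$ and is contained in the Sylow subgroup $P$; hence the two coincide, $\bigcap_{m \in \FM} {}^m P = O_p(\FM)$. Because $\FM$ is a model for $\CF$, one has $O_p(\FM) = O_p(\CF)$, and this is the point where the model hypothesis genuinely enters. Combining these, $\sigma_u = \id$ holds iff $u \in O_p(\CF)$, which yields $B \cap (\iota P) = \iota O_p(\CF)$; substituting $\sigma_u = \id$ into the formula of Lemma \ref{P} then gives the displayed coordinate description $\iota O_p(\CF)=\{(u^{m_1},\dots,u^{m_n};\id) \mid u \in O_p(\CF)\}$. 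I expect the only non-routine ingredient to be invoking $O_p(\FM) = O_p(\CF)$ correctly, everything else reducing to a direct computation with the coset representatives $m_i$.
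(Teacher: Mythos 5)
Your argument is correct, and it reaches the harder containment $B\cap(\iota P)\le\iota O_p(\CF)$ by a genuinely different route from the paper. You identify $B\cap(\iota P)$ with $\iota\bigl(\bigcap_{m\in\FM}{}^{m}P\bigr)$, note that this core equals $O_p(\FM)$ by elementary Sylow theory, and then invoke the fact that a model satisfies $O_p(\FM)=O_p(\CF)$. The paper instead sets $R=B\cap(\iota P)=B\cap\iota(\FM)$, uses $B\trianglelefteq G$ to see that $R$ is normal in $\iota\FM$ and hence fully $\CF$-normalized, and applies \cite[Proposition 3.16]{L} to get $N_\CF(R)=\CF$, so that $R\trianglelefteq\CF$ and therefore $R\le O_p(\CF)$ by the very definition of $O_p(\CF)$; the reverse containment is obtained, as in your first step, from the normality of $O_p(\CF)$ in $\FM$. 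Thus the paper stays inside the fusion-system formalism and only needs the one (asserted) fact that $O_p(\CF)\trianglelefteq\FM$, whereas you lean on the full equality $O_p(\FM)=O_p(\CF)$. That equality is indeed standard and is exactly where the model hypothesis enters, as you say, but since it is the crux you should pin it down with a reference or a short proof: the easy containment $O_p(\FM)\le O_p(\CF)$ holds for any group realizing $\CF$ with $P$ Sylow, and the converse follows from \cite[Proposition 4.3]{BCGLO} (the model contains $O_p(\CF)$ as a normal subgroup) or directly from $C_\FM(O_p(\FM))\le O_p(\FM)$. Everything else in your proposal --- the description of $B$, the criterion $\sigma_u(i)=i\iff u\in{}^{m_i}P$, and the resulting coordinate formula --- matches the paper's computation.
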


\begin{proof}
Since $O_p(\CF)$ is normal in $\FM$, any $u \in O_p(\CF)$ satisfies $m_i^{-1} u m_i \in O_p(\CF)$ so $u m_i P = m_i P$.
Hence, for all $u \in O_p(\CF)$ we have $\sigma_u= \id$. This implies
$$\iota (u) = ( m_1^{-1} u m_1, \ldots, m_n^{-1} u m_n; \id)$$ which shows
why the second statement is true.

Set $R=B \cap (\iota P)$, then we have $R=B \cap \iota (\FM)$, 
so $R$ is a normal subgroup of $\iota \FM$ and $\iota P$. Hence $R$ is fully $\CF$-normalized
and by \cite[Proposition 3.16]{L}
$$ N_{\CF}(R) \cong \CF_{N_{\iota P}(R)} (N_{\iota \FM}(R)) $$
which yields $ N_{\CF}(R)=\CF$.
So, $R$ is normal in $\CF$ and this implies $R \leq \iota O_p(\CF)$. By the
first paragraph, we have $\iota O_p(\CF) \leq B$, hence $\iota O_p(\CF) \leq R$.
\end{proof}

\begin{Lemma}\label{Op(F)2}
Let $\CF$ be a constrained fusion system on $P$ and $\FM$ a model for
$\CF$. Set $G=\Park(\CF, \FM)$. Then
$C_G(\iota O_p(\CF))$ is a $p$-group.
\end{Lemma}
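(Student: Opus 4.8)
The plan is to determine $C_G(\iota O_p(\CF))$ completely inside the wreath product $G = P \wr S_n$ and to show it already lies in the base group $B$. Write $Q := O_p(\CF)$. Since $\FM$ is a model for the constrained system $\CF$, recall that $O_p(\FM) = Q$ and that $\FM$ is strictly $p$-constrained, so $C_{\FM}(Q) \leq Q$; as $C_{\FM}(Q) \leq Q$, this forces $C_{\FM}(Q) = Z(Q)$, a $p$-group contained in $Q \leq P$. By Lemma \ref{Op(F)}, $\iota Q$ sits in $B$ and is generated by the elements $\iota(u) = (u^{m_1}, \ldots, u^{m_n}; \id)$ for $u \in Q$, where $u^{m_i} = m_i^{-1} u m_i$.

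First I would compute the conjugation action of an arbitrary $g = (x_1, \ldots, x_n; \sigma) \in G$ on these generators. Using the wreath-product multiplication dictated by Park's isomorphism (where $f(m_i) = m_{\sigma_f(i)} x_{\sigma_f(i)}$ together with right $P$-equivariance yields $(x;\sigma)(x';\sigma') = (x\cdot{}^{\sigma}x';\sigma\sigma')$ with $({}^{\sigma}x')_i = x'_{\sigma^{-1}(i)}$), the $i$-th coordinate of $g\,\iota(u)\,g^{-1}$ comes out to be $x_i\, u^{m_{\sigma^{-1}(i)}}\, x_i^{-1}$. Setting $a_i := m_i x_i m_{\sigma^{-1}(i)}^{-1} \in \FM$, the requirement that this coordinate equal $u^{m_i}$ for every $u \in Q$ is exactly $a_i u a_i^{-1} = u$ for all $u \in Q$. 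Hence $g \in C_G(\iota Q)$ if and only if $a_i \in C_{\FM}(Q)$ for every $i = 1, \ldots, n$.

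The crux is to deduce $\sigma = \id$ from this. Since $a_i \in C_{\FM}(Q) = Z(Q) \leq Q$ and $Q \trianglelefteq \FM$, each $a_i$ acts trivially on the left-coset space $\FM/P$: for any $j$ one has $a_i m_j P = m_j (m_j^{-1} a_i m_j) P = m_j P$ because $m_j^{-1} a_i m_j \in Q \leq P$. Now the relation $a_i m_{\sigma^{-1}(i)} = m_i x_i$ with $x_i \in P$ gives, on passing to cosets, $m_i P = a_i m_{\sigma^{-1}(i)} P = m_{\sigma^{-1}(i)} P$; as the $m_j P$ are distinct this forces $\sigma^{-1}(i) = i$ for all $i$, i.e. $\sigma = \id$. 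With $\sigma = \id$ the condition reads $a_i = m_i x_i m_i^{-1} \in Z(Q)$, equivalently $x_i \in m_i^{-1} Z(Q) m_i = Z(Q)$ (using that $Z(Q)$ is characteristic in $Q$, hence normal in $\FM$), which is permissible since $Z(Q) \leq P$. Therefore $C_G(\iota Q) = Z(Q)^n \leq B$, a $p$-group.

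I expect the main obstacle to be the third step, forcing $\sigma = \id$: the whole argument hinges on the strict $p$-constraint of the model, which guarantees that $C_{\FM}(O_p(\CF))$ is contained in $O_p(\CF)$ and therefore stabilizes every left coset of $P$ in $\FM$. Once the coordinates $a_i$ are pinned inside this centralizer, the triviality of $\sigma$ — and hence the collapse of the centralizer into the base group — is immediate, while the preceding coordinate bookkeeping is routine once the multiplication convention of Park's isomorphism has been fixed.
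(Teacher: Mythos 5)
Your argument is correct and follows essentially the same route as the paper: both reduce the centralizer condition coordinatewise to $m_i x_i m_{\sigma^{-1}(i)}^{-1}\in C_{\FM}(O_p(\CF))$, identify this centralizer with $Z(O_p(\CF))$ (you via strict $p$-constraint of the model, the paper via \cite[Proposition 4.12]{L}), and then use that this is a normal subgroup contained in $P$ to force $m_iP=m_{\sigma^{-1}(i)}P$ and hence $\sigma=\id$. Your explicit identification $C_G(\iota O_p(\CF))=Z(O_p(\CF))^n$ is a slightly sharper conclusion than the paper's $C_G(\iota O_p(\CF))\leq B$, but the underlying argument is the same.
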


\begin{proof}
Let $(x_1, \ldots, x_n;\beta) \in C_G(\iota O_p(\CF))$
for $x_i\in P$ and $\beta\in S_n$. Then we have by Lemma \ref{Op(F)} 
that for all $u \in O_p(\CF)$
$$(x_1, \ldots, x_n;\beta) ( u^{m_1}, \ldots, u^{m_n}; \id)
(x_1, \ldots, x_n; \beta)^{-1} 
=(u^{m_1}, \ldots, u^{m_n}; \id),$$ 
where $m_1, \cdots, m_n$ are the same as in Lemma \ref{Op(F)}, namely
$$(x_1, \ldots, x_n;\beta) ( u^{m_1}, \ldots, u^{m_n}; \id)
(x_{\beta (1)}^{-1}, \ldots, x_{\beta(n)}^{-1};\beta^{-1})=(u^{m_1}, \ldots, u^{m_n}; \id).$$ 

This implies that
for all $i=1, \ldots, n$  and for all $u\in O_p(\CF)$
$$ x_i m^{-1}_{\beta^{-1}(i)} u m_{\beta^{-1}(i)}x_i^{-1} = m_i^{-1} u m_i$$
which is equivalent to $m_i x_i m_{\beta^{-1}(i)}^{-1}$ to be in $C_{\FM}(O_p(\CF))$
for all $i=1, \ldots, n$. Since $\FM$ is
$p$-constrained and $O_p(\CF)$ is $\CF$-centric \cite[Proposition 4.12]{L} implies $ m_i x_i m_{\beta^{-1}(i)}^{-1}\in Z(O_p(\CF))$.
So for all $i$, there exists $z_i \in Z(O_p(\CF))$ such that $ m_i x_i m_{\beta^{-1}(i)}^{-1}= z_i$ or
equivalently $$ m_i x_i = z_i m_{\beta^{-1}(i)}.$$ Since $Z(O_p(\CF))$ is a characteristic
subgroup of $O_p(\CF)$ and $O_p(\CF)$ is normal in $\FM$, $Z(O_p(\CF))$ is normal in $\FM$. Thus
for any $i$, there exists $z_i' \in Z(O_p(\CF))$ with
$z_i m_{\beta^{-1}(i)}=m_{\beta^{-1}(i)} z_i'$. This implies that $$ m_i x_i=m_{\beta^{-1}(i)} z_i',$$
so $m_i P= m_{\beta^{-1}(i)} P$ for all $i$. Hence $\beta =\id$ and $C_G(\iota O_p(\CF)) \leq B.$
\end{proof}

\begin{Theorem}\label{scott-constrained}
Let $\CF$ be a constrained fusion system on $P$, $\FM$
a model for $\CF$ and $G=\Park(\CF, \FM)$ and $\iota$
Park's embedding of $\FM$ into $G$.
Then
$\Sc(G, \iota P) = \Ind_{\iota \FM}^G(k)$.
\end{Theorem}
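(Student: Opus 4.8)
The plan is to deduce the theorem as a direct application of Lemma \ref{Scott-permutation}, taking the subgroup there to be $H = \iota\FM$ and the normal $p$-subgroup to be the base group $B$ of the wreath product $G = P \wr S_n$. The substantive work has already been carried out in Lemmas \ref{Op(F)} and \ref{Op(F)2}; what remains is to check that the hypotheses of Lemma \ref{Scott-permutation} are satisfied for these choices, after which the conclusion $\Sc(G, \iota P) = \Ind_{\iota\FM}^G(k)$ is immediate.

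First I would record the two standing hypotheses of Lemma \ref{Scott-permutation}. Since $\iota$ embeds $\FM$ into $G$ and $P$ is a Sylow $p$-subgroup of $\FM$, its image $\iota P$ is a Sylow $p$-subgroup of $H = \iota\FM$. The required equality of fusion systems $\CF_{\iota P}(\iota\FM) = \CF_{\iota P}(G)$ follows from Park's theorem: on the one hand $\CF \cong \CF_{\iota P}(G)$ via $\iota$, and on the other hand $\iota$ restricts to an isomorphism $\FM \to \iota\FM$ carrying $\CF_P(\FM) = \CF$ onto $\CF_{\iota P}(\iota\FM)$. Both are therefore the transport of $\CF$ along $\iota$, hence they agree as fusion systems on $\iota P$.

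Next I would supply the normal $p$-subgroup and control the relevant centralizer. The base group $B = P^n$ is normal in $G = P \wr S_n$ and is a $p$-group, so it is a legitimate choice of $N$. It then remains to identify $N \cap H = B \cap \iota\FM$: by Lemma \ref{Op(F)} (precisely, the identity $B \cap \iota P = B \cap \iota\FM$ established in its proof) one has $B \cap \iota\FM = \iota O_p(\CF)$, and Lemma \ref{Op(F)2} gives that $C_G(\iota O_p(\CF))$ is a $p$-group. Thus $C_G(N \cap H)$ is a $p$-group, every hypothesis of Lemma \ref{Scott-permutation} holds, and that lemma yields the desired equality.

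The one point that genuinely requires care — and the reason a naive choice fails — is that one cannot take $N = \iota O_p(\CF)$ directly: by the description in Lemma \ref{Op(F)} this subgroup sits inside $B$ as a twisted diagonal and need not be normal in $G$. The essential observation is to enlarge it to the full base group $B$, which is manifestly normal, while arranging that the intersection $B \cap \iota\FM$ collapses back to $\iota O_p(\CF)$, so that the centralizer computation of Lemma \ref{Op(F)2} still applies. Once this is in place the argument is entirely formal, and I expect no further obstacle.
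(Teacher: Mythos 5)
Your proposal is correct and is essentially the paper's own argument: the paper likewise applies Lemma \ref{Scott-permutation} with $H=\iota\FM$ and $N=B$, citing Lemma \ref{Op(F)} for $B\cap\iota\FM=\iota O_p(\CF)$ and Lemma \ref{Op(F)2} for $C_G(\iota O_p(\CF))$ being a $p$-group. Your additional remarks (verifying the fusion-system equality via Park's theorem, and explaining why $N$ must be taken to be $B$ rather than $\iota O_p(\CF)$) are accurate elaborations of points the paper leaves implicit.
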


\begin{proof}
By Lemma \ref{Op(F)2}, $C_G(\iota O_p(\CF))$
is a $p$-group. Also $B$ is normal in $G$ and satisfies $B \cap \iota \FM=\iota O_p(\CF)$
by Lemma \ref{Op(F)}.
The result follows from Lemma \ref{Scott-permutation}.

\end{proof}

\section{The quadratic group case}
Consider the semidirect product of an elementary abelian group
$V$ of order $p^2$ by $\SL(2,p)$ where $\SL(2,p)$ acts naturally on $V$.
Glauberman called 
it the {\it quadratic group} and denoted it
by $\Qd(p)$ (see \cite[p.1104]{Gla}). Actually this group plays a very
important role in finite group theory and
representation theory of finite groups (see \cite[pp.81 and 95]{L}).

Set $\FM=\Qd(p)$ and a Sylow $p$-subgroup $P$ of $\FM$ as
$P=V\rtimes \langle \alpha \rangle $ where
$$\alpha=\left( {\begin{array}{cc}
   1 & 1 \\
   0 & 1 \\
  \end{array} } \right).$$ Note that $P$ is isomorphic to the extraspecial group of order $p^3$ with
  exponent $p$. We have
  $$N_{\FM}(P)=V\rtimes \big{\{} \left( {\begin{array}{cc}
   r & l \\  0 & r^{-1} \\ \end{array} } \right) \ | \ r \in (\Z/p) ^*,\ l \in \Z/p \big{\}}. $$
Set $M_1=N_{\FM}(P)$, then there are $p-1$ left cosets of $P$ in $M_1$, and they explicitly are
$$\{ \nu_{r}  P \ | \ r= 1, \ldots, p-1 \},$$
  where $\nu_{r}=\left( {\begin{array}{cc} r & 0 \\ 0 & r^{-1} \\
  \end{array} } \right)$
  and the left cosets of $M_1$ in $\FM$ are
$$\{M_1, \beta M_1, \ldots, \beta^{p-1} M_1, \gamma M_1 \}$$ where
  $\beta=\left( {\begin{array}{cc}
   1 & 0 \\
   1 & 1 \\
  \end{array} } \right)$ and $\gamma= \left( {\begin{array}{cc}
   0 & -1 \\
   1 & 0 \\
  \end{array} } \right)$.
Let us gather this information and choose representatives of left cosets of $P$ in $\FM$ as follows:
\[m_j= \begin{cases}
      \beta^s   \nu_{r+1} & \text{ if } 1 \leq j \leq p^2-p \\
      \gamma \nu_{r+1} & \text{ if } p^2-p < j \leq p^2-1 
         \end{cases}
\]
where $j-1= s(p-1) + r$ for $1 \leq j \leq p^2-p$ and $j-1=p(p-1)+r$ for $p^2-p<j\leq p^2-1$
with $0\leq s \leq p-1$ and $0 \leq r<p-1$  . 
Thus we get a set of left cosets $\{m_j P\ | \ j=1, \ldots, p^2-1\}$
of $P$ in $\FM$.

Set $\CF=\CF_P(\FM)$ and $G=\Park(\CF, \FM)$. Then $G= P\wr S_n$ for $n=p^2-1$ and $O_p(\CF)=V$.
The proof of the following lemma is obvious since $P$ is normal in $M_1$ and $M_1$ is the union of the
first $p-1$ cosets. We keep the same notation as in Section 3.

\begin{Lemma}\label{Fix1&p-1}
For $u\in P$, the permutation $\sigma_u$ fixes all elements in $\{1,\ldots,p-1\}$.
\end{Lemma}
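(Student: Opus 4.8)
The plan is to read off the definition of $\sigma_u$ from Lemma \ref{P} and to exploit the fact that the first $p-1$ coset representatives are precisely the representatives of $P$ inside $M_1=N_\FM(P)$, together with the normality of $P$ in $M_1$.

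First I would pin down the representatives $m_1,\ldots,m_{p-1}$ explicitly. For $1\le j\le p-1$ the prescribed decomposition $j-1=s(p-1)+r$ with $0\le r<p-1$ and $0\le s\le p-1$ forces $s=0$ and $r=j-1$, so that $m_j=\beta^0\nu_{r+1}=\nu_j$. Hence $m_1,\ldots,m_{p-1}$ are exactly the representatives $\nu_1,\ldots,\nu_{p-1}$ of the $p-1$ left cosets of $P$ in $M_1$, and the union of these cosets is all of $M_1$.

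Next, recall from Lemma \ref{P} that $\sigma_u$ is the permutation determined by $u\,m_i=m_{\sigma_u(i)}\,x_{\sigma_u(i)}$, equivalently by $u\,m_i\,P=m_{\sigma_u(i)}\,P$. So it suffices to check that for each $i\in\{1,\ldots,p-1\}$ and each $u\in P$ one has $u\,m_i\,P=m_i\,P$, i.e. $m_i^{-1}u\,m_i\in P$. Since $m_i=\nu_i\in M_1$, since $u\in P\le M_1$, and since $P$ is a normal subgroup of $M_1$, the conjugate $m_i^{-1}u\,m_i=\nu_i^{-1}u\,\nu_i$ lies in $P$. Therefore $u\,\nu_i\,P=\nu_i\,(\nu_i^{-1}u\,\nu_i)\,P=\nu_i\,P=m_i\,P$, which gives $\sigma_u(i)=i$, as required.

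I do not expect any genuine obstacle: the entire content is the index bookkeeping that identifies $m_1,\ldots,m_{p-1}$ with $\nu_1,\ldots,\nu_{p-1}$, after which normality of $P$ in $M_1$ does the rest. The only point worth stating carefully is that normality yields not merely that left multiplication by $u$ permutes these $p-1$ cosets among themselves, but that it fixes each of them individually, which is precisely what the argument $u\,\nu_i\,P=\nu_i\,P$ above records.
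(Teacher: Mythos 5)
Your proof is correct and follows exactly the reasoning the paper itself gives: the authors simply remark that the lemma is obvious because $P$ is normal in $M_1$ and $M_1$ is the union of the first $p-1$ cosets, which is precisely the argument you spell out (including the correct identification $m_j=\nu_j$ for $1\le j\le p-1$). No differences to report.
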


\begin{Lemma}\label{p-1-pcycles}
Let $p$ be an odd prime. For $u\in P - V$, the permutation $\sigma_u$ is a product of $p-1$ disjoint $p$-cycles in
${\mathrm{Sym}}({\{p, \ldots, n\}})$ where $n=p^2-1$.
\end{Lemma}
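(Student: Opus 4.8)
The plan is to read off the cycle type of $\sigma_u$ directly from the permutation action of $u$ on the left cosets $\FM/P$, of which $\sigma_u$ is by definition the induced permutation (Lemma~\ref{P}). Since $P$ is extraspecial of exponent $p$ (this is where $p$ odd is used) and $u\in P-V$ is nontrivial, $u$ has order $p$, so every cycle of $\sigma_u$ has length $1$ or $p$. The coset set $\{m_jP\}_{j=1}^{n}$ has $n=p^2-1$ elements, and by Lemma~\ref{Fix1&p-1} the $p-1$ cosets indexed by $1,\dots,p-1$ are fixed. Hence it suffices to prove that $\sigma_u$ has \emph{no further} fixed points: the remaining $p^2-p=p(p-1)$ cosets would then split into $\langle u\rangle$-orbits all of length $p$, giving exactly $p-1$ disjoint $p$-cycles on $\{p,\dots,n\}$.

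To count the fixed points I would replace $\FM/P$ by a transparent $\SL(2,p)$-set. The projection $\pi\colon\FM=V\rtimes\SL(2,p)\to\SL(2,p)$ has kernel $V\leq P$ and satisfies $P=\pi^{-1}(\langle\alpha\rangle)$, so it induces a bijection $\FM/P\cong\SL(2,p)/\langle\alpha\rangle$ under which left multiplication by $u$ becomes left multiplication by $\pi(u)=\alpha^{t}$, where $t\neq 0$ precisely because $u\notin V$. Since $\SL(2,p)$ acts transitively on the nonzero vectors of $V$ with $\langle\alpha\rangle$ the stabiliser of $e_1$ (the first standard basis vector of $V$), the orbit map $A\langle\alpha\rangle\mapsto Ae_1$ identifies $\SL(2,p)/\langle\alpha\rangle$ with $V\setminus\{0\}$, and $\sigma_u$ is thereby identified with the natural action $w\mapsto\alpha^{t}w$. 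The fixed vectors are exactly the nonzero $w$ with $(\alpha^{t}-I)w=0$, i.e.\ the nonzero elements of the line $\langle e_1\rangle$ fixed by the transvection $\alpha^{t}$, and there are exactly $p-1$ of them.

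Combining the two counts completes the argument: the module computation yields exactly $p-1$ fixed cosets in total, while Lemma~\ref{Fix1&p-1} exhibits the $p-1$ cosets $1,\dots,p-1$ as fixed, so these account for all of them and $\sigma_u$ is fixed-point-free on $\{p,\dots,n\}$. Every orbit there then has length $p$, giving $p(p-1)/p=p-1$ disjoint $p$-cycles, as claimed. The one point requiring care---and the place where the semidirect-product structure really enters---is the reduction to the natural module: one must check that $V\leq P$ forces each coset to have an $\SL(2,p)$-representative and that the $V$-component $v$ of $u=v\alpha^{t}$ is irrelevant to $\sigma_u$. If one prefers to stay with the explicit representatives $m_j$, the same conclusion follows by computing $m_j^{-1}um_j$ and observing that it lies in $P$ exactly when the $\SL(2,p)$-part of $m_j$ fixes the line $\langle e_1\rangle$, i.e.\ is upper triangular, which the given list of the $m_j$ shows happens only for $j\leq p-1$.
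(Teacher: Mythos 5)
Your proof is correct, but it takes a genuinely different route from the paper's. The paper proves the lemma by brute force: it reduces to $u=\alpha$, computes $\alpha\beta^s$, $\alpha\beta^{p-1}$ and $\alpha\gamma$ explicitly in terms of the chosen representatives $m_j$, and then tracks the orbits through the recursion $f(s)=s(s+1)^{-1}$, verifying that $f^{(s)}(1)=(s+1)^{-1}$ and that the orbit closes up after exactly $p$ steps, one cycle $\sigma_r$ for each $r\in\{1,\dots,p-1\}$. You instead argue structurally: $\sigma_u=\sigma_{\alpha^t}$ has order dividing $p$, so only the fixed-point count matters; passing to $\SL(2,p)/\langle\alpha\rangle$ via the quotient by $V\leq P$ and then to $V\setminus\{0\}$ via the orbit map (with $\langle\alpha\rangle=\mathrm{Stab}(e_1)$, which is correct) identifies $\sigma_u$ with the transvection $w\mapsto\alpha^t w$, whose $p-1$ nonzero fixed vectors are exactly accounted for by Lemma~\ref{Fix1&p-1}; the remaining $p(p-1)$ points then fall into $p-1$ orbits of length $p$. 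All the individual steps check out (the $V$-component of $u$ is indeed irrelevant because $V$ is normal and contained in $P$, and $P\cap\SL(2,p)=\langle\alpha\rangle$). What your argument buys is brevity and conceptual transparency, and it avoids the delicate combinatorics of the $f$-iteration entirely. What it does \emph{not} deliver, and what the paper's computation does, is the explicit labelling of which cosets lie on which cycle -- in particular the identity $m_{(\sigma_r)^i(p+r-1)}P=\alpha^i(\beta\nu_rP)$ -- which is reused in the proof of Lemma~\ref{pgroup} (to pin down $m_jP=\alpha^{j'}\beta\nu_{r'}P$ for an index $j$ on the cycle $\sigma_{r'}$). So your proof fully establishes the lemma as stated, but if it replaced the paper's proof wholesale, the later argument would need that explicit orbit description supplied separately.
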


\begin{proof}
For $u\in P - V$, we have $u = v \alpha^i$ for some $v\in V$ and $i\in \{1, \ldots, p-1\}$.
So $\sigma_u=\sigma_{\alpha^i}$ by Lemma \ref{Op(F)}.
Hence, without loss of generality we can choose $u=\alpha$.
From Lemma \ref{Fix1&p-1}, $\sigma_u(i)=i$ for $i=1, \ldots, p-1$. Moreover, we have
$$\alpha \beta^s = \beta^{s(s+1)^{-1}}  \nu_{s+1} \ \alpha^{(s+1)^{-1}} $$
if $s+1$ is invertible in $\Z/p$ and
$$\alpha \beta^{p-1}= \gamma  \nu_{p-1}  \alpha^{p-1}. $$
Also, $ \alpha \gamma =\beta  \alpha^{p-1}.$ 
Therefore, $u (\beta^s  P) =\beta^{s(s+1)^{-1}} \nu_{s+1} P$ if $s+1$ is
invertible in $\Z/p$ and $u (\beta^{p-1} P) = \gamma  \nu_{p-1} P$. Also,
$ u (\gamma P)=\beta P$. As a result, since $\nu_{r}$ 
normalizes $P$, we have 

\[u \ (\beta^s  \nu_{r} P)= \begin{cases}
      \beta^{s(s+1)^{-1}} \nu_{s+1}  \nu_{r} P & \text{ if $s+1$ is invertible in $\Z/p$} \\
    \gamma  \nu_{p-1}  \nu_{r} P & \text{ if $s=p-1$}
         \end{cases}
\]
and $ u (\gamma  \nu_{r} P)=\beta \nu_{r} P$ for all $0\leq s \leq p-1$ and $1 \leq r \leq p-1.$

At this point some combinatorics is needed. Let $f: (\Z/p)^*\backslash \{p-1\} \to (\Z/p)^* \backslash \{1\}$ which is defined as 
$f(s)=s(s+1)^{-1}$. Then $f$ is a bijection between these sets. Hence 
$$ u \ (\beta  \nu_{r} P) =  \beta^{f(1)} \nu_{2}  \nu_{r} P$$

$$u \ (\beta^{f(1)} \nu_{2}  \nu_{r} P) = \beta^{(f\circ f )(1)} \nu_{f(1)+1} \nu_{2}  \nu_{r} P$$
and in general 
$$u \ (\beta^{f^{(i)}(1)} \nu_{g(i-1)}  \nu_{r} P) = \beta^{f^{(i+1)} (1)} \nu_{g(i)}  \nu_{r} P$$
where $f^{(k)}$ denotes composition of $f$ with itself $k$ times (with the convention that $f^{(0)}(1)=1$) 
and $g(i)=\prod_{k=0}^{i}(f^{(k)}(1)+1).$ Note that this process continues until $(f^{(i+1)}(1)+1)$ is not 
invertible in $\Z/p$ which is equivalent to say that $f^{(i+1)}(1)=p-1$ in $\Z/p$. Note also that we have 
$$f(1)=2^{-1}, \ f^{(2)}(1)= 2^{-1} (2^{-1}+1)^{-1}= (2(2^{-1}+1))^{-1}= 3^{-1}$$
so by induction it is easy to see that $f^{(s)}(1)=(s+1)^{-1}$ whenever $(s+1)$ is invertible in $\Z/p$. 
From this, we deduce that $s=p-2$ is the least positive integer satisfying $f^{(s)}(1)=p-1$ in $\Z/p$.

After applying $u$ $p-2$ number of times to $m_{p+r-1} P=\beta \nu_{r} P,$ we reach the coset 
$(\beta^{p-1} \nu_{g(p-3)} \nu_{r} P).$ Since $p-1$ is the only element in $(\Z/p)^*\backslash \{1\}$ 
which is equal to its inverse, we have 
$g(p-3)=\prod_{k=2}^{p-1}k=p-1.$ So $\nu_{g(p-3)}=\nu_{p-1} $ 
and the coset we reach is $(\beta^{p-1}  \nu_{p-1}\ \nu_{r} P).$ If we apply $u$ to this coset 
we reach $\gamma  (\nu_{p-1})^2 \ \nu_{r} P=\gamma  \nu_{r} P$. Finally, if we apply $u$ to 
this coset we get $(\beta  \nu_{r} P)$. Therefore, we reach the coset that we started in $p$ 
number of steps, which corresponds to a $p$-cycle in ${\mathrm{Sym}}({\{p, \ldots, n\}})$, let us denote this 
cycle by $\sigma_{r}$. 
The identity 
 $$m_{(\sigma_{r})^i(p+r-1)} P =  \alpha^i(\beta \nu_{r} P)$$
gives the $i+1$st position of $\sigma_{r}$'s presentation as above for
$i=0, 1,\ldots, p-1$ and $1\leq r \leq p-1.$ More precisely, the presentation of the coset at the right hand side in terms of $m_j$'s  
gives the information about $\sigma_r$. Moreover, as can be seen from the computations above, as $r$ changes, we get $p$-cycles 
disjoint from each other. As a result, we get $\sigma_u= \prod_{r=1}^{p-1} \sigma_{r}.$ 

\end{proof}

\begin{Lemma}\label{pgroup}
Let $p$ be an odd prime. If $Q$ is a subgroup of $P$ of order $p^2$,  then $C_G(\iota Q)$ is a $p$-group.
\end{Lemma}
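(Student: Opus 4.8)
The plan is to show that every element of $C_G(\iota Q)$ is a $p$-element; since the base group $B\cong P^n$ of $G=P\wr S_n$ is a $p$-group, it then suffices to prove that for any $c=(x_1,\dots,x_n;\tau)\in C_G(\iota Q)$ the permutation part $\tau\in S_n$ has $p$-power order. First I would record the structure of the order-$p^2$ subgroups of $P$: as $P$ is extraspecial of order $p^3$ and exponent $p$, every such $Q$ is a maximal subgroup and therefore contains $Z(P)=\Phi(P)$, which here is the line $\{(a,0)\mid a\in\Z/p\}\le V$. If $Q=V=O_p(\CF)$ the claim is exactly Lemma~\ref{Op(F)2}, so I may assume $Q\neq V$; then $|Q\cap V|=p$ forces $Q\cap V=Z(P)$, and I fix a generator $z$ of $Z(P)$ (so $z\in V$) together with an element $u\in Q-V$.

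Next I would exploit the two generators separately. Since $z\in V=O_p(\CF)$, Lemma~\ref{Op(F)} gives $\iota(z)=(z^{m_1},\dots,z^{m_n};\id)$, and writing the relation $c\,\iota(z)=\iota(z)\,c$ componentwise yields, for every $i$, that $w_i:=m_i x_i m_{\tau^{-1}(i)}^{-1}$ lies in $C_\FM(z)=C_\FM(Z(P))$. A direct computation in $\FM=V\rtimes\SL(2,p)$ shows $C_\FM(Z(P))=V\rtimes\mathrm{Stab}_{\SL(2,p)}((1,0)^{T})=V\rtimes\langle\alpha\rangle=P$. Hence $m_{\tau^{-1}(i)}=w_i^{-1}m_i x_i\in P m_i P$ for all $i$, which means that $\tau$ permutes the left cosets $m_iP$ within their $P$–$P$ double cosets; in other words $\tau$ preserves the partition of $\{1,\dots,n\}$ into the blocks determined by $P\backslash\FM/P$.

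I would then determine this block partition. Reducing modulo $V$ identifies $P\backslash\FM/P$ with $\langle\alpha\rangle\backslash\SL(2,p)/\langle\alpha\rangle$, and the Bruhat (lower-left-entry) analysis gives exactly $2(p-1)$ double cosets: the $p-1$ singleton ``torus'' blocks $\{1\},\dots,\{p-1\}$ coming from $\nu_1,\dots,\nu_{p-1}=m_1,\dots,m_{p-1}$ (each $\nu_r$ normalizes $P$), and $p-1$ ``big cell'' blocks of size $p$ accounting for the cosets indexed by $\{p,\dots,n\}$. The key point is that these big-cell blocks coincide with the supports of the $p-1$ disjoint $p$-cycles of $\sigma_u$ from Lemma~\ref{p-1-pcycles}: since $\sigma_u=\sigma_\alpha^{\,i}$ has the same orbits as $\sigma_\alpha$, and since every coset reached from $\beta\nu_rP$ by repeatedly applying $\alpha$ stays inside the single double coset $\langle\alpha\rangle\,\overline{\beta\nu_r}\,\langle\alpha\rangle$ (the leftmost $\alpha$ being absorbed), the $\sigma_u$-orbit through $\beta\nu_rP$ is precisely the big-cell block with lower-left parameter $r$.

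Finally I would combine the two constraints. Commuting with $\iota(u)$ forces the permutation parts to commute, so $\tau\in C_{S_n}(\sigma_u)$; together with the block preservation from the previous step, $\tau$ fixes each of $1,\dots,p-1$ and maps each big-cell block to itself. On such a block $\sigma_u$ restricts to a single $p$-cycle, so $\tau$ restricted there commutes with a $p$-cycle of $S_p$ and is therefore a power of it. Hence $\tau$ is a product of powers of disjoint $p$-cycles, so $\tau^p=\id$ and $c$ is a $p$-element, giving that $C_G(\iota Q)$ is a $p$-group. The main obstacle is the third and fourth steps: pinning down $C_\FM(Z(P))=P$ and, more delicately, verifying that the $P$–$P$ double-coset blocks match the cycle supports of $\sigma_u$ exactly, which is what makes the two otherwise independent constraints interlock.
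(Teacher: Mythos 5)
Your argument is correct, and its skeleton is the same as the paper's: reduce to $Q\neq V$ via Lemma \ref{Op(F)2}, write $Q=Z(P)\times\langle u\rangle$ with $u\in Q-V$, and play the condition of centralizing $\iota Z(P)$ against that of centralizing $\iota u$ to pin down the permutation part $\tau$ of an element of $C_G(\iota Q)$. Where you genuinely diverge is in how the endgame is closed. The paper extracts from the $C_G(\iota Z(P))$-condition only that $\tau$ fixes $1,\dots,p-1$, notes $C_{\mathrm{Sym}(\{p,\dots,n\})}(\sigma_u)\cong(\Z/p)\wr S_{p-1}$, and kills the $S_{p-1}$-part by contradiction: an element of prime order $q\neq p$ would swap two cycles $\sigma_r,\sigma_{r'}$, and a second, explicit application of the centralizer condition produces a $2\times 2$ matrix that must lie in $\langle\alpha\rangle$ and cannot unless $r=r'$. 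You instead use the full strength of the $C_G(\iota Z(P))$-condition at once --- $m_{\tau^{-1}(i)}=w_i^{-1}m_ix_i\in Pm_iP$, so $\tau$ preserves the partition into $P$--$P$ double-coset blocks --- and then identify the $p-1$ big-cell blocks with the supports of the $p$-cycles of $\sigma_u$ (each orbit is contained in a block and both have size $p$, so they coincide); commuting with a $p$-cycle on each block then forces $\tau^p=\id$ with no matrix computation and no appeal to Cauchy's theorem beyond the final ``all elements are $p$-elements'' step. The double-coset count ($p-1$ singleton blocks from the torus, $p-1$ blocks of size $p$ from the big cell, detected by the lower-left entry) checks out, as does $C_{\FM}(Z(P))=V\rtimes\langle\alpha\rangle=P$. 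Your route buys a conceptual explanation of why the two constraints interlock (double cosets equal cycle supports), at the modest cost of the Bruhat-style count in $\SL(2,p)$; the paper's route is more computational but entirely explicit. Both proofs are complete.
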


\begin{proof}
There are two types of subgroups of order $p^2$ in $P$.
If $Q=V$, from Lemma \ref{Op(F)2} $C_G(\iota Q)$ is a $p$-group.
If $Q\neq V$ then $Q= \langle t \rangle \times \langle y \rangle$
where $t=(1,0)$ is a central element of $P$ and $y=(a,b) \alpha$
for some $a,b \in \Z/p$. We have $C_G(\iota Q)= C_G(\iota t) \cap C_G(\iota y)$.
Since $t\in O_p(\CF)$, by Lemma \ref{Op(F)} $\iota t=(t^{m_1}, \ldots, t^{m_n}; \id)$.
So we have
$$C_G(\iota t)=\{ (x_1, \ldots, x_n; \tau) \ | \ m_{\tau^{-1}(i)} x_i^{-1} m_i^{-1} \in C_M(t) \text{ for all } i=1, \ldots, n\} $$
and since $ C_M(t)=P$, this becomes
$$C_G(\iota t)=\{ (x_1, \ldots, x_n; \tau) \ | \ m_{\tau^{-1}(i)} x_i^{-1} m_i^{-1} \in P\text{ for all } i=1, \ldots, n \}. $$
In particular, the condition for $(x_1, \ldots, x_n; \tau)$
to be in $C_G(\iota t)$ implies that
$m_{\tau^{-1}(i)} x_i^{-1} m_i^{-1} \in P$ for all $i=1,\ldots, p-1$. This is equivalent to say that 
for all  $i=1,\ldots, p-1$, we have $m_{\tau^{-1}(i)} P = P m_i$ which
is equal to $m_i P$ since $P$ is normal in $M_1$. Thus $\tau(i)=i$ for $i=1, \ldots, p-1$.
From Lemmas \ref{P} and \ref{p-1-pcycles},
$$\iota y=(m_1^{-1}ym_{\sigma_y^{-1}(1)}, \ldots, m_n^{-1}ym_{\sigma_y^{-1}(n)}; \sigma_y)$$ where 
$\sigma_y=\prod_{r=1}^{p-1} \sigma_r $ where for $r=1, \ldots, p-1$, the permutations $\sigma_r$ are
$p$-cycles disjoint from each other. So if an element
$(x_1, \ldots, x_n; \tau)$ centralizes $\iota y$, then $\tau$ centralizes the permutation
$\sigma_y$.
By \cite[4.1.19]{JK}, we have $C_{{\mathrm{Sym}}({\{p, \ldots, n\}})}(\sigma_y) \cong (\Z/p) \wr S_{p-1}$.
Here the wreathing part comes from the permutations which permute the $p-1$ cycles lying
in $\sigma_y$. Since $\tau(i)=i$ for $i=1, \ldots, p-1$, we deduce that if
$\allowbreak (x_1, \ldots, x_n; \tau) \in C_G(\iota Q)$
then $\tau \in C_{{\mathrm{Sym}}({\{p, \ldots, n\}})}(\sigma_y)$.

Suppose to the contrary that $C_G(\iota Q)$ is not a $p$-group. Then by Cauchy's Theorem
there exists a prime $q$ different from $p$ and an element $(x_1, \ldots, x_n; \tau)$
in $C_G(\iota Q)$ of order $q$. In this case, $\tau$ has order $q$ and can be seen as an element of 
$S_{p-1}$ by the remarks in the previous paragraph. Hence, for all $r\in \{1, \ldots, p-1\}$ we have 
$\tau \sigma_r \tau^{-1}=\sigma_{\tau(r)}$. Since $\tau$ is non-identity there exist $r, r' \in \{1, \ldots, p-1\}$ 
with $r \neq r'$ such that $r'= \tau(r)$. So the cycles $\sigma_r$ and $\sigma_{r'}$ are disjoint. In this case, 
there exists an integer $j$
appearing in the cycle $\sigma_{r'}$ such that $\tau(p+r-1)=j$.
Since $j$ appears in $\sigma_{r'}$, we have $m_j P= \alpha^{j'} \beta \nu_{r'} P$ for some $j' \in \{0, 1, \ldots, p-1\}$. 
The condition for $(x_1, \ldots, x_n; \tau)$ to be
in $C_G(\iota t)$ implies in this case that $m_{p+r-1} x_j^{-1} m_j^{-1}$ to be in $P$.
Letting $x_j=v \alpha^{i}$ for some $v\in V$ and $i \in \Z/p$,
and observing that the wreathing part of $m_{p+r-1} x_j^{-1} m_j^{-1}$  is 
$\beta \nu_{r} \alpha^{-i} \nu_{(r')^{-1}} \beta^{-1}$, we should have 
$\beta \nu_{r} \alpha^{-i} \nu_{(r')^{-1}} \beta^{-1}$,
to be in $\langle \alpha \rangle$.
But the matrix
$$\left( {\begin{array}{cc} r(r')^{-1}+i r r' & -i r r' \\  r(r')^{-1}+i r r'- r^{-1}r' & -irr'+r^{-1}r' \\ \end{array} } \right)$$
lies in $\langle \alpha \rangle$ if and only if $r=r'$ and $i=0$. This
contradicts with $\sigma_r$ and $\sigma_{r'}$ to be disjoint.
Therefore $C_G(\iota Q)$ is a $p$-group in this case, too.

\end{proof}

\begin{proof}[Proof of Theorem \ref{main}]

First, let us consider the case where $p=2$. 
Then it is easy to know that $\Qd(2)\cong S_4$, so that
Theorem \ref{main} is covered by Theorem \ref{side}.

Now, let us assume that $p$ is an odd prime.
Set $\FM=\Qd(p)$. Since $\FM$ is $p$-constrained, $\CF=\CF_P(\FM)$ is a constrained fusion system, so Theorem
\ref{scott-constrained} implies that $\Sc(G, \iota P)=\Ind_{\iota \FM}^G( k)$. Moreover
since $\CF_{\iota P}(\iota \FM) = \CF_{\iota P}(G)$, Lemma \ref{BrauerQuotients}
implies that
$$\Res_{\iota Q\,C_G(\iota Q)}^{N_G(\iota Q)} ((\Sc(G, \iota P))(\iota Q))=\Ind_{\iota Q\,C_{\iota \FM}(\iota Q)}^{\iota Q\,C_G(\iota Q)}(k)$$
for all $Q \leq P$. Thus, using \cite[Theorem 1.3]{IK} it is enough to show that
$\Ind_{\iota Q\,C_{\iota \FM}(\iota Q)}^{\iota Q\,C_G(\iota Q)}(k) $ is
indecomposable for all fully $\CF$-normalized $Q\leq P$.

If $Q\leq P$ such that $|Q| \geq p^2$ then by Lemma \ref{pgroup} $C_G(\iota Q)$ is a
$p$-group. Hence Green's Indecomposability Theorem implies that
$\Ind_{\iota Q\,C_{\iota \FM}(\iota Q)}^{\iota Q\,C_G(\iota Q)}(k) $ is
indecomposable in this case.

Since $Q$ is fully $\CF$-normalized, \cite[Proposition 2.5]{L} implies $Q$
is fully $\CF$-centralized. Hence by \cite[Lemma 2.10]{L}
$\iota Q\,C_{\iota P}(\iota Q)$ is a Sylow $p$-subgroup of $\iota Q\,C_{\iota \FM}(\iota Q)$.
Moreover $\CF_{\iota P}(\iota \FM)= \CF_{\iota P}(G)$ implies
$$\CF_{\iota Q\,C_{\iota P}(\iota Q)}(\iota Q\,C_{\iota \FM}(\iota Q))
=\CF_{\iota Q\,C_{\iota P}(\iota Q)}(\iota Q\,C_G(\iota Q)).$$

If $Q \leq P$ of order $p$, there are two possibilities: either $Q < V$ or
$Q \nless V$. If $Q < V$, then $Q$ is $\CF$-conjugate to
$Z(P)$. Since $Z(P)$ is the only fully $\CF$-normalized subgroup of $P$ 
with the property that $Q < V$, 
it follows that 
$Q=Z(P)$. Then $Q\,C_V(Q)=V$.
So we have
$$C_{\iota Q\,C_G(\iota Q)}(\iota (Q\,C_V(Q)))=C_{\iota Q\,C_G(\iota Q)}(\iota V) \le
C_G(\iota V).$$
Lemma \ref{Op(F)2} implies that $C_G(\iota V)$ is a $p$-group so
$C_{\iota Q\,C_G(\iota Q)}(\iota (Q\,C_V(Q)))$ is also a $p$-group. Hence we can apply Lemma
\ref{Scott-permutation} by changing $G$ with $\iota Q\,C_G(\iota Q)$, $H$ with
$\iota Q\,C_{\iota M}(\iota Q)$, $P$ with $\iota Q\,C_{\iota P}(\iota Q)$ and $N$
with $\iota Q\,C_B(\iota Q)$ and deduce that
$$\Ind_{\iota Q\,C_{\iota \FM}(\iota Q)}^{\iota Q\,C_G(\iota Q)}(k)
=\Sc(\iota Q\,C_G(\iota Q), \, \iota Q\,C_{\iota P}(\iota Q)).$$
If $Q \nless V$ which is fully $\CF$-normalized, then $ Q\,C_V(Q)=Z(P) \times Q$ is a subgroup of $P$ of order $p^2$.
So $C_G(\iota Q\,C_{\iota V}(\iota Q))$ is a $p$-group by Lemma
\ref{pgroup} which implies that $C_{\iota Q\,C_G(\iota Q)}(\iota Q\,C_{\iota V}(\iota Q))$
is also a $p$-group. Hence we can apply Lemma \ref{Scott-permutation}
by changing $G$ with $\iota Q\,C_G(\iota Q)$, $H$ with
$\iota Q\,C_{\iota \FM}(\iota Q)$, $P$ with $\iota Q\,C_{\iota P}(\iota Q)$ and $N$
with $\iota Q\,C_B(\iota Q)$ and deduce
that
$$\Ind_{\iota Q\,C_{\iota \FM}(\iota Q)}^{\iota Q\,C_G(\iota Q)}(k)
=\Sc(\iota Q\,C_G(\iota Q), \iota Q\,C_{\iota P}(\iota Q))$$
in this case, too. Therefore, we deduce that $\Ind_{\iota Q\,C_{\iota \FM}(\iota Q)}^{\iota Q\,C_G(\iota Q)}(k)$
is indecomposable for all possible fully $\CF$-normalized subgroups $Q$ of $P$ as desired.
\end{proof}

\section{The $3{\cdot}2^n$ ordered group case}

\begin{proof}[Proof of Theorem \ref{side}]
Since $G = P \wr S_3$, the index $|G: \iota \FM|$ is a power of $2$. Hence
Lemma \ref{Green} together with \cite[Chapter 4, Corollary 8.5]{NT} implies that
$$\Sc(G, \iota P) = \Ind_{\iota \FM}^G(k).$$
Since $\CF_{\iota P}(\iota \FM)=\CF_{\iota P}(G)$, Lemma \ref{BrauerQuotients}
implies that
$$\Res_{\iota Q\,C_G(\iota Q)}^{N_G(\iota Q)} ((\Ind_{\iota \FM}^G (k)) (\iota Q)) =\Ind_{\iota Q\,C_{\iota \FM}(\iota Q)}^{\iota Q\,C_G(\iota Q)}(k)$$
for all $Q \leq P$.
So it is enough to show the indecomposability of
$\Ind_{\iota Q\,C_{\iota \FM}(\iota Q)}^{\iota Q\,C_G(\iota Q)}(k)$ for all
$Q \leq P$.

$\FM$ is solvable and $O_{2'}(\FM)=1$, so $\FM$ is strictly $2$-constrained and
$\CF=\CF_P(\FM)$ is a constrained fusion system. Since $\FM$ has $2$-length $2$,
the non-trivial subgroup $O_2(\FM)$ is strictly contained in $P$. Note also
that $O_2(\FM)=O_2(\CF)$.

If $ Q \not \leq O_2(\CF)$, then there exists an element $u \in Q$ such that
$\sigma_u$ is a non-trivial $2$-cycle by Lemma \ref{Op(F)}.
Thus, if $(x_1, x_2, x_3; \beta) \in C_G(\iota u)$, then $\beta \in C_{S_3}(\sigma_u)= \langle \sigma_u \rangle$.
Hence any element in $C_G(\iota u)$ is an element of order a power of $2$ and $C_G(\iota Q)$
is a $2$-group. Therefore $\Ind_{\iota Q\,C_{\iota \FM}(\iota Q)}^{\iota Q\,C_G(\iota Q)}(k)$
is indecomposable by Green's Indecomposability Theorem.

If $Q \leq O_2(\CF)$, we have two cases: either $C_{\FM}(Q)$ is a group of composite order
(that is $|C_{\FM}(Q)|=3.2^k$ for $k\leq n$) or $C_{\FM}(Q)$ is a $2$-group.
In the first case, since $C_{\iota \FM}(\iota Q)\le C_G(\iota Q)$, $3$ divides
both of the orders $C_G(\iota Q)$ and $C_{\iota \FM}(\iota Q)$ so that the index $|C_G(\iota Q):C_{\iota \FM}(\iota Q)|$
is a power of $2$, hence the corresponding induced module is indecomposable by
Lemma \ref{Green}. If $C_{\FM}(Q)$ is a $2$-group, then $C_{\FM}(Q)$ lies
inside one of the conjugates of $P$, since $P$ is a Sylow $p$-subgroup of $\FM$.
Suppose that $C_{\FM}(Q) \leq P$. Then since $Q\leq O_2(\CF)$, by Lemma \ref{Op(F)}
$$\iota Q = \{(u^{m_1}, u^{m_2}, u^{m_3}; \id) \ | \ u \in Q\}$$
where $\{m_1, m_2, m_3\}$ is a set of coset representatives of $P$
in $\FM$. So $(x_1, x_2, x_3; \beta) \in C_G(\iota Q)$ if and only if
$ m_{\beta^{-1}(i)} x_i^{-1} m_i^{-1} \in C_{\FM}(Q)$.
Without loss of generality $m_1$ can be chosen as the representative of
the coset $P$. Putting $i=1$ in the last condition, we get $m_{\beta^{-1}(1)} P=P$
since $C_{\FM}(Q)\leq P$. We get $\beta (1)=1$ and so $\beta$ is either identity
or a $2$-cycle. Therefore $C_G(\iota Q)$
is a $2$-group and the indecomposability of $\Ind_{\iota Q\,C_{\iota \FM}(\iota Q)}^{\iota Q\,C_G(\iota Q)}(k)$
follows from Green's Indecomposability Theorem. If $C_{\FM}(Q) \not \leq P$, there
exists an element $m\in \FM$ such that $C_{\FM}(Q) \leq P^m.$ From this we get $C_{\FM}( ^m Q) \leq P.$ Thus
$C_G(\iota (^m Q))$ is a $2$-group by the argument above. Since $C_G(\iota ( ^m Q)) \cong C_G(\iota Q)$, the centralizer
$C_G(\iota Q)$ is a $2$-group. Hence indecomposability of
$\Ind_{\iota Q\,C_{\iota \FM}(\iota Q)}^{\iota Q\,C_G(\iota Q)}(k)$ follows from
Green's Indecomposability Theorem. Therefore, the Brauer indecomposability is established.

\end{proof}

\section*{Acknowledgements}

\noindent
{\small
The authors would like to thank the referees for their careful reading 
of the first manuscript and 
for valuable comments.
A part of this work was done while the second author was visiting
Chiba University in July 2017. She would like to thank
the Center for Frontier Science, Chiba University for their hospitality.
She would like to thank also Naoko Kunugi for her hospitality.}

\section*{Funding}

\noindent
{\small
The first author was supported in part by  the Japan Society for
Promotion of Science (JSPS), Grant-in-Aid for Scientific Research
(C)15K04776, 2015--2018.
The second author was partially supported by the Center for Frontier Science,
Chiba University and Mimar Sinan Fine Arts University Scientific Research Project Unit
with project number 2017/22.}

\end{document}